\newtheorem{theorem}{\bf Theorem}[section]
\newtheorem{corollary}[theorem]{\bf Corollary}
\newtheorem{lemma}[theorem]{\bf Lemma}
\newcommand{\qed}{\hfill $\square$ \bigskip}
\begin{document}

\baselineskip=0.30in
\vspace*{40mm}

\begin{center}
{\LARGE \bf New methods for calculating the degree distance and the Gutman index}
\bigskip \bigskip

{\large \bf Simon Brezovnik, \qquad
Niko Tratnik
}
\bigskip\bigskip

\baselineskip=0.20in

\textit{Faculty of Natural Sciences and Mathematics, University of Maribor, Slovenia} \\
{\tt simon.brezovnik2@um.si, niko.tratnik@um.si}
\medskip

\bigskip\medskip

(Received \today)

\end{center}

\noindent
\begin{center} {\bf Abstract} \end{center}
In the paper we develop new methods for calculating the two well-known topological indices, the degree-distance and the Gutman index. Firstly, we prove that the Wiener index of a double vertex-weighted graph can be computed from the Wiener indices of weighted quotient graphs with respect to a partition of the edge set that is coarser than $\Theta^*$-partition. This result immediately gives a method for computing the degree-distance of any graph. Next, we express the degree-distance and the Gutman index of an arbitrary phenylene by using its hexagonal squeeze and inner dual. In addition, it is shown how these two indices of a phenylene can be obtained from the four quotient trees. Furthermore, reduction theorems for the Wiener index of a double vertex-weighted graph are presented. Finally, a formula for computing the Gutman index of a partial Hamming graph is obtained.
\vspace{3mm}\noindent


\baselineskip=0.30in



\section{Introduction}

In this paper, we study methods for calculating the degree distance and the Gutman index of a graph. These graph invariants are important degree- and distance-based topological indices. The \textit{degree distance} of a connected graph $G$, denoted by $DD(G)$, is defined as
$$DD(G) = \sum_{\lbrace u,v \rbrace \subseteq V(G)}(\textrm{deg}(u) + \textrm{deg}(v))d(u,v)$$
and was firstly introduced in 1994 \cite{dobrynin}. However, a similar concept was invented five years earlier by H. Schultz (see \cite{schultz}) and therefore, the degree distance is sometimes referred to as the Schultz index. The \textit{Gutman index} of a connected graph $G$, denoted by $Gut(G)$, was introduced in \cite{gutman_index} and it is defined in the following way:
 $$Gut(G) = \sum_{\lbrace u,v \rbrace \subseteq V(G)}\textrm{deg}(u)\textrm{deg}(v)d(u,v).$$

\noindent 
Both topological indices are special variations of the well known Wiener index, which was introduced in 1947 by H. Wiener \cite{wiener} as
$$W(G)= \sum_{\lbrace u,v \rbrace \subseteq V(G)}d(u,v).$$
In fact, the Gutman index and the degree distance are special cases of weighted Wiener indices. Therefore, our results are stated in a general way. 

The degree-distance and the Gutman index were extensively studied in the past. For example, some recent research on the degree-distance can be found in \cite{hua,khalifeh-2009,zhu} and on the Gutman index in \cite{knor,mazo}. Moreover, both indices were studied and compared in \cite{das,ghala,guo,gutman1}. In addition, paper \cite{khalifeh-2010} provides some methods for computing these two indices on partial cubes. Furthermore, recently some generalizations using the Steiner distance were introduced \cite{gutman,mao,mao1}.

The paper is organized in six sections. The following section states some basic definitions and preliminary results. 

We continue with a cut method for computing the degree-distance of a graph. Note that a cut method is a powerful tool for investigating distance-based topological indices \cite{klavzar-2015}. In \cite{glavni} a cut method for computing the degree-distance of a benzenoid system was developed. Therefore, in section 3 we generalize this result such that it can be applied for any connected graph by using quotient graphs with respect to a partition coarser than $\Theta^*$-partition. In addition, our result holds for the Wiener index of a double vertex-weighted graph and is analogous to Theorem \ref{rac_wie} from \cite{nad_klav}, which can be applied to calculate the Wiener index of a weighted graph. In section 4, the mentioned theorems are used for computing the degree-distance and the Gutman index of phenylenes. More precisely, these indices of an arbitrary phenylene are expressed by using its hexagonal squeeze and inner dual.  Such result is already known for the Wiener index of a phenylene \cite{klav_gut-1997}. Moreover, it is shown how these two indices of a phenylene can be obtained from the four quotient trees.

In \cite{redukcija} some reduction theorems for calculating the Wiener index of a weighted graph were proposed. In section 5, we apply a similar method to obtain such theorems also for the Wiener index of a double vertex-weighted graph, which can then be used also for the degree-distance. As an example, we calculate the degree-distance and the Gutman index of an infinite family of graphs that were firstly considered in \cite{nad_klav}. 

Finally, in section 6 we show an inequality for the Wiener index of a weighted graph and apply it for the Gutman index. In particular, the equality holds if and only if the graph is a partial Hamming graph. Note that similar results are known for the Wiener index \cite{klavzar-2006} and for the degree-distance \cite{glavni}.

\section{Preliminaries}

All the graphs considered in this paper are simple and finite. For a graph $G$, the set of all the vertices is denoted by $V(G)$ and the set of edges by $E(G)$. Moreover, we define $d_G(u,v)$ to be the usual shortest-path distance between vertices $u, v\in V(G)$ and also sometimes write $d(u,v)$ instead of $d_G(u,v)$ if there is no confusion. Furthermore, for any $u \in V(G)$ the {\it open neighbourhood} $N(u)$ is defined as the set of all the vertices that are adjacent to $u$. The \textit{degree} of $u$, denoted by $\textrm{deg}(u)$, is defined as the cardinality of the set $N(u)$. Finally, we set $N[u]=N(u) \cup \lbrace u \rbrace$.
\smallskip

\noindent
If $G$ is a graph, we say that a function $w: V(G) \rightarrow \mathbb{R}^+$ is a \textit{weight}. A pair $(G,w)$ is then called a \textit{vertex-weighted graph} or shortly a \textit{weighted graph}. Moreover, if $a,b$ are weights, then the triple $(G,a,b)$ is called a \textit{double vertex-weighted graph}. 
\smallskip

\noindent
The Wiener index of a connected weighted graph $(G,w)$ can be defined in two different ways (see \cite{khalifeh-2010}):
$$W_{*}(G,w) = \sum_{\lbrace u,v \rbrace \subseteq V(G)}w(u)w(v)d(u,v),$$
$$W_{+}(G,w) = \sum_{\lbrace u,v \rbrace \subseteq V(G)}(w(u) + w(v))d(u,v).$$
However, since the first definition is more common, we usually write $W(G,w)$ instead of $W_{*}(G,w)$ and call it the Wiener index of a weighted graph. Obviously, the Gutman index and the degree distance are just special cases of these two weighted indices.
\smallskip

\noindent
The Wiener index of a connected double vertex-weighted graph $(G,a,b)$, introduced in \cite{glavni}, is defined as:
$$W(G,a,b)= \sum_{\lbrace u,v \rbrace \subseteq V(G)}\left(a(u)b(v) + a(v)b(u) \right)d(u,v).$$
Obviously, $W_{+}(G,a)$ is exactly $W(G,a,b)$ if we take $b\equiv 1$.
\smallskip

\noindent
Two edges $e_1 = u_1 v_1$ and $e_2 = u_2 v_2$ of graph $G$ are in relation $\Theta$, $e_1 \Theta e_2$, if
$$d(u_1,u_2) + d(v_1,v_2) \neq d(u_1,v_2) + d(u_1,v_2).$$
Note that this relation is also known as Djokovi\' c-Winkler relation.
The relation $\Theta$ is reflexive and symmetric, but not necessarily transitive.
We denote its transitive closure (i.e.\ the smallest transitive relation containing $\Theta$) by $\Theta^*$. It is easy to observe that any two edges in an odd cycle are in $\Theta^*$ relation. Moreover, any two diametrically opposite edges in an even cycle are in $\Theta$ relation.

Let $ \mathcal{E} = \lbrace E_1, \ldots, E_k \rbrace$ be the $\Theta^*$-partition of the set $E(G)$. Then we say that a partition $\lbrace F_1, \ldots, F_r \rbrace$ of $E(G)$ is \textit{coarser} than $\mathcal{E}$
if each set $F_i$ is the union of one or more $\Theta^*$-classes of $G$.
\smallskip

\noindent
Suppose $G$ is a graph and $F \subseteq E(G)$. The \textit{quotient graph} $G / F$ is a graph whose vertices are connected components of the graph $G \setminus F$, such that two components $C_1$ and $C_2$ are adjacent in $G / F$ if some vertex in $C_1$ is
adjacent to a vertex of $C_2$ in $G$.
\smallskip

\noindent
The following theorem gives a method for computing the Wiener index of a weighted graph.

\begin{theorem} \cite{nad_klav}
\label{rac_wie}
Let $G$ be a connected graph. If $\lbrace F_1, \ldots, F_k \rbrace$ is a partition coarser than the $\Theta^*$-partition, then
$$W(G,w) = \sum_{i=1}^k W(G / F_i, w_i),$$
where $w_i: V(G/F_i) \rightarrow \mathbb{R}^+$ is defined by $w_i(C)= \sum_{x \in V(C)} w(x)$ for all connected components $C$ of the graph $G \setminus F_i$.
\end{theorem}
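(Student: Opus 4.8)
The plan is to reduce the whole statement to a single \emph{distance decomposition lemma} and then to rearrange the defining double sum. Write $\pi_i\colon V(G)\to V(G/F_i)$ for the map that sends a vertex $x$ to the component of $G\setminus F_i$ containing it, i.e.\ to the vertex of $G/F_i$ that $x$ represents. The lemma I would isolate is
\[
d_G(u,v)=\sum_{i=1}^{k} d_{G/F_i}\bigl(\pi_i(u),\pi_i(v)\bigr),\qquad u,v\in V(G).
\]
Granting this, the remainder of the argument is purely formal, so essentially all of the difficulty lies in proving the lemma.

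The inequality ``$\ge$'' is the easy half. Fix a shortest $u$--$v$ path $P$ in $G$. Because $\{F_1,\dots,F_k\}$ partitions $E(G)$, each edge of $P$ belongs to exactly one block and $|P|=\sum_i|P\cap F_i|$. Since each block is an edge cut that separates the endpoints of every one of its edges (a standard property of the $\Theta^*$-relation that is inherited by any coarser partition), applying $\pi_i$ contracts every edge of $P$ lying outside $F_i$ and carries every edge of $P\cap F_i$ to a genuine edge of $G/F_i$. Hence $\pi_i(P)$ is a walk from $\pi_i(u)$ to $\pi_i(v)$ of length $|P\cap F_i|$, so $|P\cap F_i|\ge d_{G/F_i}(\pi_i(u),\pi_i(v))$, and summing over $i$ gives the inequality.

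For the reverse inequality I would argue by induction on $d_G(u,v)$, the case $u=v$ being trivial. Let $e=uu'$ be the first edge of a geodesic from $u$ to $v$, and say $e\in F_j$. For $i\ne j$ we have $\pi_i(u)=\pi_i(u')$, while $\pi_j(u)$ and $\pi_j(u')$ are adjacent in $G/F_j$; by the induction hypothesis applied to the shorter geodesic from $u'$ to $v$, the lemma holds for the pair $(u',v)$. Since $d_G(u,v)=1+d_G(u',v)$ and only the $j$-th coordinate changes, everything comes down to showing $d_{G/F_j}(\pi_j(u),\pi_j(v))=1+d_{G/F_j}(\pi_j(u'),\pi_j(v))$, i.e.\ that the projection of a geodesic to each quotient is again a geodesic. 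This equality is exactly where the hypothesis that $\{F_i\}$ is coarser than the $\Theta^*$-partition is indispensable, and I expect it to be the main obstacle: the adjacency of $\pi_j(u)$ and $\pi_j(u')$ only bounds the two quotient distances to within $1$, and fixing the sign ``$+1$'' requires the structural fact that no shortest path of $G$ ever wastes a step across $F_j$. Concretely, I would derive it from the properties of the Djokovi\'c--Winkler relation---in particular that each component of $G\setminus F_j$ is convex in $G$ and that the canonical map $G\to\square_{i}(G/F_i)$ into the Cartesian product of the quotients is an isometry---the reverse inequality being precisely the distance-non-increasing direction of that map.

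With the lemma in hand the conclusion is routine. Starting from $W(G,w)=\tfrac12\sum_{u\in V(G)}\sum_{v\in V(G)}w(u)w(v)\,d_G(u,v)$, substitute the lemma and interchange the finite summations to pull the sum over $i$ to the front. For fixed $i$ the summand $d_{G/F_i}(\pi_i(u),\pi_i(v))$ depends on $u,v$ only through their components, so grouping vertices by the components $C,C'$ of $G\setminus F_i$ and using the bilinearity $\bigl(\sum_{x\in C}w(x)\bigr)\bigl(\sum_{y\in C'}w(y)\bigr)=\sum_{x\in C,\,y\in C'}w(x)w(y)$ turns the inner double sum into $\tfrac12\sum_{C}\sum_{C'}w_i(C)w_i(C')\,d_{G/F_i}(C,C')=W(G/F_i,w_i)$, the diagonal terms vanishing because $d_{G/F_i}(C,C)=0$. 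Summing over $i$ then yields $W(G,w)=\sum_{i=1}^k W(G/F_i,w_i)$, as claimed. This final step carries no difficulty; the entire weight of the proof sits in the distance decomposition lemma.
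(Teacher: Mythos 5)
Your overall architecture coincides with the paper's: the distance decomposition you isolate is exactly Lemma \ref{distance} (which the paper itself takes from \cite{nad_klav,tratnik} without proof), and your closing rearrangement---interchange the sums, group vertices by the components of $G\setminus F_i$, use bilinearity of the weight sums---is precisely how the paper proves the analogous double-weighted Theorem \ref{splosen}; Theorem \ref{rac_wie} itself appears in the paper only as a cited result. Your rearrangement is correct, and so is your ``$\geq$'' half of the lemma: the fact that the endpoints of any edge in a $\Theta^*$-class lie in distinct components after that class is deleted (hence also after deleting any coarser block) is a standard consequence of the lemma that every walk joining the endpoints of an edge $e$ contains an edge in relation $\Theta$ with $e$.

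The gap is in the ``$\leq$'' direction, and it is a circularity rather than a mere omission. The assertion that the canonical map $\alpha : G \rightarrow G/F_1 \,\Box \cdots \Box\, G/F_k$ is an isometry is, by the distance formula in Cartesian products, \emph{literally the same statement} as the lemma you are trying to prove; invoking it for ``the distance-non-increasing direction'' assumes the conclusion. The convexity of the components of $G \setminus F_j$ is in the same boat: it is a consequence of that isometry (granted the isometry, no geodesic between two vertices of a component can use an edge of $F_j$), not an independently available fact. What you may legitimately cite is the Graham--Winkler theorem, i.e.\ the isometry of the canonical embedding with respect to the $\Theta^*$-classes $E_1,\ldots,E_m$ themselves; but then your lemma still needs the bridging identity
$$d_{G/F_i}(\pi_i(u),\pi_i(v)) \;=\; \sum_{E_j \subseteq F_i} d_{G/E_j}(\alpha_j(u),\alpha_j(v)),$$
whose lower-bound half requires an actual argument (lift a shortest path of the coarser quotient $G/F_i$ to a walk in $G$ using exactly that many $F_i$-edges, then project it onto each $G/E_j$ with $E_j \subseteq F_i$), and this is absent from your sketch. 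Note also that once any such isometry statement is granted, the lemma is immediate for all pairs $u,v$ at once, so your induction on $d_G(u,v)$ does no work: its inductive step reduces to exactly the assertion the induction was meant to establish.
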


\noindent
The \textit{Cartesian product} $G_1 \Box \cdots \Box G_k$ of graphs $G_1, \ldots, G_k$ has the vertex set $V(G_1)\times \cdots \times V(G_k)$, two vertices $(u_1,\ldots,u_k)$ and $(v_1,\ldots, v_k)$ being adjacent if they differ in exactly
one position, say in $i$th, and $u_iv_i$ is an edge of $G_i$. 
\smallskip

\noindent
Let $H$ and $G$ be two graphs. A function $\ell: V(H) \rightarrow V(G)$ is called an \textit{embedding of $H$ into $G$} if $\ell$ is injective and, for any two vertices $u,v \in V(H)$, $\ell(u)\ell(v) \in E(G)$ if $uv \in E(H)$. If such a function $\ell$ exists, we say that $H$ can be \textit{embedded} in $G$. An embedding $\ell$ of graph $H$ into graph $G$ is called an \textit{isometric embedding} if for any two vertices $u,v \in V(H)$ it holds $d_H(u,v) = d_G(\ell(u),\ell(v))$. Moreover, subgraph $H$ of a graph $G$ is called an \textit{isometric subgraph} if for each $u,v \in V(H)$ it holds $d_H(u,v) = d_G(u,v)$.
\smallskip

\noindent
A \textit{Hamming graph} is the Cartesian product of complete graphs and a \textit{partial Hamming graph} is any isometric subgraph of a Hamming graph. In
the particular case where all the factors are $K_2$'s we speak of hypercubes and partial cubes,
respectively. Partial cubes constitute a large class of graphs with a lot of applications and includes, for example, many families of chemical graphs (benzenoid systems, trees, phenylenes, cyclic phenylenes, polyphenyls). Partial Hamming graphs and in particular partial cubes were studied and characterized in many papers (for example, see \cite{klavzar-book}).
\smallskip

\noindent
For an edge $ab$ of a graph $G$, let $W_{ab}$ be the set of vertices of $G$ that are closer to $a$ than
to $b$. We write $\langle S \rangle$ for the subgraph of $G$ induced by $S \subseteq V(G)$. Moreover, a subgraph $H$ of $G$ is called {\it convex} if for arbitrary vertices $u,v \in V(H)$ every shortest path between $u$ and $v$ in $G$ is also contained in $H$. The following theorem proved by Djokovi\' c and Winkler puts forth two fundamental characterizations of partial cubes:
\begin{theorem} \cite{klavzar-book} \label{th:partial-k} For a connected graph $G$, the following statements are equivalent:
\begin{itemize}
\item [(i)] $G$ is a partial cube.
\item [(ii)] $G$ is bipartite, and $\langle W_{ab} \rangle $ and $\langle W_{ba} \rangle$ are convex subgraphs of $G$ for all $ab \in E(G)$.
\item [(iii)] $G$ is bipartite and $\Theta = \Theta^*$.
\end{itemize}
\end{theorem}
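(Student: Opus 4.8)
\noindent
The plan is to prove the three statements equivalent along the cycle $(i)\Rightarrow(iii)\Rightarrow(ii)\Rightarrow(i)$, with the canonical embedding into a hypercube carrying the real weight. Two general lemmas about $\Theta$ will be the workhorses. First, \emph{no two edges of a shortest path are in relation $\Theta$}: if $v_0v_1\cdots v_m$ is a geodesic and $e=v_iv_{i+1}$, $f=v_jv_{j+1}$ with $i<j$, then since every subpath of a geodesic is a geodesic one computes $d(v_i,v_j)+d(v_{i+1},v_{j+1})=2(j-i)=d(v_i,v_{j+1})+d(v_{i+1},v_j)$, so $e\not\Theta f$. Second, the \emph{crossing lemma}: in a bipartite graph, for a fixed edge $ab$ an edge $e=xy$ satisfies $e\,\Theta\,ab$ if and only if $e$ joins $W_{ab}$ to $W_{ba}$. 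Here I would use that in a bipartite graph $d(v,a)$ and $d(v,b)$ differ by exactly $1$ for every vertex $v$ (so $\{W_{ab},W_{ba}\}$ partitions $V(G)$), and then simply evaluate the defining expression of $\Theta$ in the two cases ``$x,y$ on the same side'' and ``$x,y$ on opposite sides'', obtaining equality in the first case and a discrepancy of $2$ in the second.

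For $(i)\Rightarrow(iii)$ I would use the labelling of a hypercube $Q_n$ by coordinate directions: two edges of $Q_n$ are in relation $\Theta$ exactly when they point in the same coordinate, so $\Theta$ on $Q_n$ is an equivalence relation. Since an isometric subgraph preserves all the distances occurring in the definition of $\Theta$, the relation $\Theta$ of $G$ is the restriction of the equivalence relation $\Theta$ of $Q_n$ to $E(G)$; a restriction of a transitive relation is transitive, hence $\Theta=\Theta^*$, and $G$ is bipartite as a subgraph of the bipartite graph $Q_n$.

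For $(iii)\Rightarrow(ii)$, fix an edge $ab$ and take $u,v\in W_{ab}$ together with a shortest $u$--$v$ path $P$. Because $u$ and $v$ lie on the same side of the cut, $P$ meets the set $F_{ab}$ of edges joining $W_{ab}$ to $W_{ba}$ an even number of times. By the crossing lemma every such edge is $\Theta$-related to $ab$, so by transitivity ($\Theta=\Theta^*$) any two of them are $\Theta$-related to each other; but the geodesic lemma forbids two $\Theta$-related edges on $P$. Hence $P$ crosses $F_{ab}$ zero times, so $P\subseteq W_{ab}$, proving that $\langle W_{ab}\rangle$ (and symmetrically $\langle W_{ba}\rangle$) is convex.

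The implication $(ii)\Rightarrow(i)$ is the heart of the argument and where I expect the main difficulty, the goal being the \emph{canonical embedding}. First, convexity makes each $\langle W_{ab}\rangle$ connected, so $F_{ab}$ is an edge cut whose removal leaves exactly the two components $\langle W_{ab}\rangle$ and $\langle W_{ba}\rangle$. The crucial and most delicate step is to show, using convexity, that if an edge $xy$ lies in $F_{ab}$ then $F_{xy}=F_{ab}$; this says that the cuts $F_{ab}$ partition $E(G)$ (equivalently, by the crossing lemma, that $\Theta$ is transitive). Granting this, let $F_1,\dots,F_k$ be the distinct cuts, orient each, and define $\ell\colon V(G)\to\{0,1\}^k$ by letting the $i$-th coordinate of $\ell(v)$ record on which side of $F_i$ the vertex $v$ lies. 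Every edge of $G$ belongs to exactly one cut and therefore changes exactly one coordinate, so $\ell$ sends edges to edges of the hypercube $Q_k$ and $d_{Q_k}(\ell(u),\ell(v))\le d_G(u,v)$. For the reverse inequality I would argue that a shortest $u$--$v$ path crosses each cut at most once --- crossing a cut twice would place two $\Theta$-related edges on a geodesic, contradicting the geodesic lemma --- so the number of coordinates in which $\ell(u)$ and $\ell(v)$ differ equals $d_G(u,v)$. Thus $\ell$ is an isometric embedding into $Q_k$ and $G$ is a partial cube. The two places I expect to fight hardest are extracting the clean cut structure $F_{xy}=F_{ab}$ from convexity alone, and verifying that the canonical map is genuinely \emph{isometric} rather than merely distance-non-increasing.
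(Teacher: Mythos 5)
The paper never proves this theorem---it is quoted from \cite{klavzar-book} (the Djokovi\'c--Winkler characterization)---so your attempt has to be judged on its own terms. Your cycle $(i)\Rightarrow(iii)\Rightarrow(ii)\Rightarrow(i)$ is the standard architecture, and the first two implications are complete and correct: the geodesic lemma (no two edges of a shortest path are $\Theta$-related), the crossing lemma in bipartite graphs, the observation that $\Theta$ on $Q_n$ is ``same coordinate'' and that isometric subgraphs inherit the restriction of this equivalence relation, and the parity-plus-transitivity argument for convexity of $\langle W_{ab}\rangle$ are all sound.

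The genuine gap is in $(ii)\Rightarrow(i)$: you flag the assertion that $xy\in F_{ab}$ implies $F_{xy}=F_{ab}$ as ``the crucial and most delicate step'' and then proceed by ``granting this,'' never proving it. That step is not routine bookkeeping that can be deferred---it is exactly the content of Djokovi\'c's theorem, and it is the only place where the convexity hypothesis of $(ii)$ is ever used; everything after it (the map $\ell$, edges going to edges, each cut crossed at most once, hence isometry) you carry out correctly, so your second worry about isometry is unfounded once the partition claim is in place. The missing step does follow from convexity in a few lines, so the hole is fillable: let $xy\in E(G)$ with $x\in W_{ab}$, $y\in W_{ba}$, and suppose some $w\in W_{ab}$ satisfies $d(w,y)<d(w,x)$. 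By bipartiteness $d(w,y)=d(w,x)-1$, so concatenating a shortest $w$--$y$ path with the edge $yx$ yields a shortest $w$--$x$ path passing through $y\in W_{ba}$, contradicting convexity of $\langle W_{ab}\rangle$. Hence $W_{ab}\subseteq W_{xy}$, and symmetrically $W_{ba}\subseteq W_{yx}$; since both pairs partition $V(G)$ in a bipartite graph, $W_{ab}=W_{xy}$ and $W_{ba}=W_{yx}$, so $F_{ab}=F_{xy}$. With this paragraph inserted, your argument becomes a complete and correct proof; without it, the implication $(ii)\Rightarrow(i)$ is assumed rather than established.
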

Is it also known that if $G$ is a partial cube and $E$ is a $\Theta$-class of $G$, then $G \setminus E$ has exactly two connected components, namely $\langle W_{ab} \rangle $ and $\langle W_{ba} \rangle$, where $ab \in E$. For more information about partial cubes see \cite{klavzar-book}.
\smallskip

\noindent
The \textit{canonical embedding} of a connected graph $G$ is defined as follows: Let the $\Theta^*$-classes of $G$ be $E_1,E_2,\ldots,E_k$. Let $\alpha_i : V(G)\rightarrow V(G / E_i)$ be the map sending any $u \in V(G)$ to the component of $G \setminus E_i$ that contains it. The canonical embedding
$\alpha : V(G) \rightarrow G / E_1 \Box \cdots \Box G / E_k$
is defined by
$$\alpha(u) = (\alpha_1(u), \ldots, \alpha_k(u))$$
for any $u \in V(G)$. It is well known that the canonical embedding is irredundant isometric embedding, see \cite{klavzar-book}. Here irredundant means that every factor graph $G / E_i$, $i \in \lbrace 1,\ldots,k \rbrace $, has at least two vertices and that each vertex of $G / E_i$ appears as a coordinate of some vertex $\alpha(u)$, where $u \in V(G)$.


\baselineskip=16pt

\section{The degree distance via unification of $\Theta^*$-classes}

The main goal of this section is to find a formula for calculating the degree distance of a graph. However, we present a result in a more general setting, i.e.\,by proving it for the Wiener index of a double vertex-weighted graph. Some chemical applications are given in the next section.

Let $G$ be a connected graph and $\lbrace F_1, \ldots, F_r \rbrace$ a partition coarser than the $\Theta^*$-partition. For any $u \in V(G)$ and $i \in \lbrace 1, \ldots, r \rbrace$ we denote by $\ell_i(u)$ the connected component of the graph $G \setminus F_i$ which contains $u$. The result of the following lemma was proved in \cite{nad_klav}. A complete proof can also be found in \cite{tratnik}.

\begin{lemma} \cite{nad_klav,tratnik} \label{distance}
Let $G$ be a connected graph. If $\lbrace F_1, \ldots, F_r \rbrace$ is a partition coarser than the $\Theta^*$-partition, then for any $u,v \in V(G)$ it holds
$$d_G(u,v) = \sum_{i=1}^r d_{G / F_i}(\ell_i(u),\ell_i(v)).$$
\end{lemma}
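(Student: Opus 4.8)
The plan is to deduce the statement from the canonical embedding of $G$, whose isometry is recorded in Section 2, together with the additivity of distances over the factors of a Cartesian product. Write the $\Theta^*$-classes as $E_1,\dots,E_k$ and let $\alpha_j:V(G)\to V(G/E_j)$ be the coordinate maps of the canonical embedding $\alpha$, so that
$$d_G(u,v)=\sum_{j=1}^{k}d_{G/E_j}(\alpha_j(u),\alpha_j(v))\qquad (u,v\in V(G)).$$
Since each block $F_i$ is a union of $\Theta^*$-classes, I would reindex this sum as $\sum_{i=1}^{r}\sum_{E_j\subseteq F_i}$, so that the task reduces to matching the inner sum over a single block with $d_{G/F_i}(\ell_i(u),\ell_i(v))$. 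A convenient first observation is that for every class $E_j\subseteq F_i$ the value $\alpha_j$ is constant on each connected component of $G\setminus F_i$: a path staying inside such a component uses no edge of $F_i$, hence no edge of $E_j$, and therefore never changes the $G/E_j$-coordinate. This makes well defined a projection $p_{ij}:V(G/F_i)\to V(G/E_j)$ sending a component $C$ to the common value of $\alpha_j$ on $C$, and by construction $p_{ij}(\ell_i(u))=\alpha_j(u)$.

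I would then establish the identity by two inequalities that squeeze it. For the upper bound I would check that the map $\Phi:V(G)\to\Box_{i=1}^{r}V(G/F_i)$, $u\mapsto(\ell_1(u),\dots,\ell_r(u))$, sends adjacent vertices to equal or adjacent vertices: an edge $uv$ lies in exactly one block $F_i$, so $\ell_{i'}(u)=\ell_{i'}(v)$ for every $i'\neq i$, while in the $i$-th coordinate the two images are equal or adjacent. Such a map is distance non-increasing, which gives $\sum_{i}d_{G/F_i}(\ell_i(u),\ell_i(v))=d_{\Box_i}(\Phi(u),\Phi(v))\le d_G(u,v)$. For the reverse inequality I would assemble the projections $p_{ij}$ into a single map $\Pi:\Box_i G/F_i\to\Box_j G/E_j$ acting coordinatewise; since each $p_{ij}$ carries an edge of $G/F_i$ to an edge or a vertex of $G/E_j$, the map $\Pi$ sends edges to edges and is non-expansive, and the relation $p_{ij}(\ell_i(u))=\alpha_j(u)$ yields $\Pi\circ\Phi=\alpha$. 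Combining this with the canonical isometry gives
$$d_G(u,v)=d_{\Box_j}(\alpha(u),\alpha(v))=d_{\Box_j}(\Pi(\Phi(u)),\Pi(\Phi(v)))\le d_{\Box_i}(\Phi(u),\Phi(v))=\sum_{i}d_{G/F_i}(\ell_i(u),\ell_i(v)).$$
Putting the two bounds together proves the claimed equality.

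The step I expect to be the main obstacle is verifying that $\Phi$ and the $p_{ij}$ are genuinely non-expansive, i.e. that the coordinates behave correctly under the edges of $G/F_i$. This rests on the structural observation that all $G$-edges joining two adjacent components $C_1,C_2$ of $G\setminus F_i$ lie in a single $\Theta^*$-class: any edge $xy$ between $C_1$ and $C_2$ must lie in $F_i$, and because the endpoints of an edge differ in the coordinate $\alpha_{j'}$ precisely when that edge belongs to $E_{j'}$, the pair $(C_1,C_2)$ differs in exactly one coordinate, forcing all parallel edges into the same class. Once this is in place, crossing an edge of $G/F_i$ changes exactly one coordinate $\alpha_j$ and does so along an edge of $G/E_j$, which is exactly what makes every map in the argument distance non-increasing and the final squeeze go through.
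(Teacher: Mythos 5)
Your proof is correct, but note that this paper contains no proof of the lemma to compare against: the result is imported from \cite{nad_klav}, with a complete proof in \cite{tratnik}. Measured against those sources, your argument is in substance the same: it rests on the Graham--Winkler fact, quoted in Section 2 of the paper, that the canonical embedding $\alpha$ is an isometry, i.e. $d_G(u,v)=\sum_{j} d_{G/E_j}(\alpha_j(u),\alpha_j(v))$, and then squeezes the claimed identity between two inequalities. The proofs in the cited references phrase these inequalities by projecting walks rather than by exhibiting non-expansive maps: a shortest $u,v$-path of $G$ projects, for each $i$, to a walk in $G/F_i$ whose length is at most the number of $F_i$-edges of the path, giving $\sum_i d_{G/F_i}(\ell_i(u),\ell_i(v))\le d_G(u,v)$; and a shortest $\ell_i(u),\ell_i(v)$-path in $G/F_i$ projects, for each $E_j\subseteq F_i$, to a walk in $G/E_j$, giving $\sum_{E_j\subseteq F_i} d_{G/E_j}(\alpha_j(u),\alpha_j(v))\le d_{G/F_i}(\ell_i(u),\ell_i(v))$, which after summing over $i$ and invoking the canonical isometry yields the reverse bound. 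Your maps $\Phi$ and $\Pi$ encode exactly these two projections, so the difference is one of language (non-expansive maps between Cartesian products versus walk counting), not of mathematical content; the map-theoretic packaging is arguably cleaner, since non-expansiveness is checked once, edge by edge.

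One small remark: in your final paragraph you lean on the claim that the endpoints of an edge of $E_{j}$ always lie in distinct components of $G\setminus E_{j}$ (equivalently, that an edge's endpoints differ in the coordinate $\alpha_{j}$ precisely when the edge belongs to $E_{j}$). The ``only if'' half is trivial, but the ``if'' half is a genuinely nontrivial fact of Djokovi\'c--Winkler theory, and you assert it without proof. Fortunately your argument never needs it: for $\Pi$ to be non-expansive it suffices that crossing an edge of $G/F_i$ changes \emph{at most} one coordinate $\alpha_j$, and by at most one step, and this follows from the trivial direction alone (an edge outside $E_{j'}$ joins vertices in the same component of $G\setminus E_{j'}$, so fixing any one $G$-edge realizing the adjacency $C_1C_2$ shows all other coordinates agree on $C_1$ and $C_2$). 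Likewise, the stronger structural claim that all $G$-edges between two adjacent components of $G\setminus F_i$ lie in a single $\Theta^*$-class is true but unnecessary. Dropping these over-claims would make your proof fully self-contained modulo the quoted canonical isometry.
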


\noindent
The following theorem is the main result of the section.

\begin{theorem} \label{splosen}
If $(G,a,b)$ is a connected double vertex-weighted graph with a partition $\{F_1,...,F_r\}$ coarser than $\Theta^*$-partition, 
then
$$W(G,a,b)=\sum_{i=1}^r W(G/F_i, a_i, b_i),$$
where $a_i: V(G/F_i) \rightarrow \mathbb{R}^+$ is defined by $a_i(C)= \sum_{x \in V(C)} a(x)$ and $b_i: V(G/F_i) \rightarrow \mathbb{R}^+$ is defined by $b_i(C)= \sum_{x \in V(C)} b(x)$ for all connected components $C$ of the graph $G \setminus F_i$.
\end{theorem}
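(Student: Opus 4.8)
The plan is to start from the definition of $W(G,a,b)$, replace each distance $d_G(u,v)$ by its decomposition from Lemma \ref{distance}, interchange the two (finite) summations, and then recognize each resulting inner sum as the Wiener index of the corresponding double vertex-weighted quotient graph $(G/F_i,a_i,b_i)$. Concretely, I would first write
\[
W(G,a,b)=\sum_{\{u,v\}\subseteq V(G)}\bigl(a(u)b(v)+a(v)b(u)\bigr)\sum_{i=1}^r d_{G/F_i}(\ell_i(u),\ell_i(v)),
\]
and then swap the order of summation to obtain $\sum_{i=1}^r S_i$, where $S_i=\sum_{\{u,v\}}\bigl(a(u)b(v)+a(v)b(u)\bigr)\,d_{G/F_i}(\ell_i(u),\ell_i(v))$. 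It then suffices to prove $S_i=W(G/F_i,a_i,b_i)$ for each fixed $i$.

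The key manipulation, and the part needing the most care, is turning the symmetrized sum over unordered pairs into a factorizable sum grouped by components. Since the summand is symmetric in $u$ and $v$, I would pass to ordered pairs, so that $S_i=\sum_{(u,v):\,u\ne v} a(u)b(v)\,d_{G/F_i}(\ell_i(u),\ell_i(v))$. Because $d_{G/F_i}(\ell_i(u),\ell_i(v))=0$ whenever $\ell_i(u)=\ell_i(v)$ (in particular when $u=v$), I may extend the sum to all ordered pairs at no cost and then collect the vertices according to which component of $G\setminus F_i$ they lie in:
\[
S_i=\sum_{C,D\in V(G/F_i)} d_{G/F_i}(C,D)\Bigl(\sum_{u\in V(C)}a(u)\Bigr)\Bigl(\sum_{v\in V(D)}b(v)\Bigr)=\sum_{C,D}a_i(C)\,b_i(D)\,d_{G/F_i}(C,D).
\]
Finally, collapsing these ordered pairs $C,D$ back to unordered pairs (the diagonal terms again contributing zero) yields exactly $\sum_{\{C,D\}}\bigl(a_i(C)b_i(D)+a_i(D)b_i(C)\bigr)d_{G/F_i}(C,D)=W(G/F_i,a_i,b_i)$, which completes the argument.

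The main obstacle is essentially bookkeeping: keeping the asymmetric product $a(u)b(v)$ correctly aligned with the factors $a_i(C)$ and $b_i(D)$ while moving between unordered pairs, ordered pairs, and the grouping by components. The reindexing succeeds precisely because $a_i$ and $b_i$ are defined by summing $a$ and $b$ over each component, which is what lets the inner sum factor as a product of two independent component-sums.

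A shorter alternative, which sidesteps this bookkeeping entirely, is to use polarization together with the already-stated single-weight result. One checks the identity $W(G,a,b)=W(G,a+b)-W(G,a)-W(G,b)$ (with $W$ the ordinary weighted Wiener index $W_*$), and observes that $(a+b)_i=a_i+b_i$ since the component-sums are additive. Applying Theorem \ref{rac_wie} to each of the three single weights $a+b$, $a$, and $b$, and then invoking the same polarization identity inside each quotient $G/F_i$, gives $W(G,a,b)=\sum_{i=1}^r\bigl(W(G/F_i,a_i+b_i)-W(G/F_i,a_i)-W(G/F_i,b_i)\bigr)=\sum_{i=1}^r W(G/F_i,a_i,b_i)$, as claimed.
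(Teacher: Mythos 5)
Your main argument is correct and is essentially the paper's proof: decompose each distance via Lemma \ref{distance}, interchange the summations, and factor each inner sum by the components of $G\setminus F_i$. The paper does the factorization with unordered pairs, fixing two distinct components $X,Y$ and using that the quotient distance is constant on $V(X)\times V(Y)$ together with the identity $\sum_{x\in V(X)}\sum_{y\in V(Y)}\bigl(a(x)b(y)+a(y)b(x)\bigr)=a_i(X)b_i(Y)+a_i(Y)b_i(X)$; you pass to ordered pairs instead and fold back at the end, which is a harmless bookkeeping variant of the same idea (and you correctly note that diagonal and same-component terms vanish because the quotient distance is zero there). What is genuinely different---and worth highlighting---is your polarization alternative. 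The identity $W(G,a,b)=W(G,a+b)-W(G,a)-W(G,b)$ (expand $(a+b)(u)(a+b)(v)$), the additivity $(a+b)_i=a_i+b_i$ of component-sums, and Theorem \ref{rac_wie} applied to the three weights $a+b$, $a$, $b$ reduce Theorem \ref{splosen} formally to the single-weight case, with the same polarization identity applied inside each quotient $G/F_i$ to reassemble $\sum_{i=1}^r W(G/F_i,a_i,b_i)$. This bypasses Lemma \ref{distance} and the component-by-component factorization entirely; its only (trivial) obligation is to observe that $a+b$ is again a positive weight, so Theorem \ref{rac_wie} applies. The paper's route is self-contained and would generalize to settings where no single-weight analogue is at hand, but your reduction is shorter and exhibits the double-weighted theorem as a purely formal consequence of the weighted one.
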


\begin{proof}
By the definition and Lemma \ref{distance} we obtain
\begin{eqnarray*}
W(G,a,b) & = & \sum_{\{u,v\} \in V(G)} (a(u)b(v)+a(v)b(u))d_G(u,v) \\
& = & \sum_{\{u,v\} \in V(G)}(a(u)b(v)+a(v)b(u))\left( \sum_{i=1}^r d_{G/F_i}(\ell_i(u),\ell_i(v))\right) \\ 
& = & \sum_{i=1}^r \left(\sum_{\{u,v\} \in V(G)} (a(u)b(v)+a(v)b(u)) d_{G/F_i}(\ell_i(u),\ell_i(v)) \right).
\end{eqnarray*}

\noindent
Let $X,Y$ be two arbitrary distinct connected components of $G \setminus F_i$. Obviously, for any $x,x' \in V(X)$ and $y, y' \in V(Y)$ it holds $d_{G/F_i}(\ell_i(x),\ell_i(y))=d_{G/F_i}(\ell_i(x'),\ell_i(y'))$. Moreover,

\begin{eqnarray*}
\sum_{x \in V(X)} \sum_{y \in V(Y)} \left(a(x)b(y)+a(y)b(x) \right)& = & \sum_{x \in V(X)}a(x)\sum_{y \in V(Y)}b(y) + \sum_{y \in V(Y)}a(y)\sum_{x \in V(X)}b(x) \\
& = & a_i(X)b_i(Y) + a_i(Y)b_i(X). \\
\end{eqnarray*}

\noindent
Therefore,
\begin{eqnarray*}
W(G,a,b)& = & \sum_{i=1}^r \left(\sum_{\{X,Y\} \in V(G/F_i)} (a_i(X)b_i(Y) + a_i(Y)b_i(X)) d_{G/F_i}(X,Y) \right) \\
& = & \sum_{i=1}^r W(G/F_i, a_i, b_i), \\
\end{eqnarray*}

\noindent
which finishes the proof. \qed

\end{proof}

In the rest of the section we present some consequences of the previous theorem. Firstly, the obtained result can be used to compute $W_+(G,a)$ for a connected weighted graph.

\begin{corollary}
Let $(G,a)$ be a connected weighted graph with a partition $\{F_1,...,F_r\}$ coarser than $\Theta^*$-partition.
It holds
$$W_+(G,a)=\sum_{i=1}^r W(G/F_i, a_i,b_i)$$
where $a_i: V(G/F_i) \rightarrow \mathbb{R}$ is defined by $a_i(C)= \sum_{x \in V(C)} a(x)$ and $b_i: V(G/F_i) \rightarrow \mathbb{R}^+$ is defined by $b_i(C)= |V(C)|$ for all connected components $C$ of the graph $G \setminus F_i$.
\end{corollary}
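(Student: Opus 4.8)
The plan is to recognise $W_+(G,a)$ as a special instance of the double vertex-weighted Wiener index and then invoke Theorem \ref{splosen} directly, so that essentially no new work is required. The key observation, already recorded after the definition of $W(G,a,b)$, is that if one sets $b\equiv 1$ then $a(u)b(v)+a(v)b(u)=a(u)+a(v)$ for all $u,v$, whence $W(G,a,b)=W_+(G,a)$. Thus the corollary should follow by specialising the main theorem of the section to a constant second weight.

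Concretely, I would first define the constant weight $b\colon V(G)\to\mathbb{R}^+$ by $b(x)=1$ for every $x\in V(G)$. This is a legitimate weight, so $(G,a,b)$ is a connected double vertex-weighted graph to which Theorem \ref{splosen} applies with the given partition $\{F_1,\dots,F_r\}$. That theorem yields $W(G,a,b)=\sum_{i=1}^r W(G/F_i,a_i,b_i)$, where $a_i(C)=\sum_{x\in V(C)}a(x)$ and $b_i(C)=\sum_{x\in V(C)}b(x)$ for each connected component $C$ of $G\setminus F_i$. The weight $a_i$ is precisely the one in the statement, and the only remaining computation is $b_i(C)=\sum_{x\in V(C)}1=|V(C)|$, which matches $b_i$ in the corollary. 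Combining this with the identity $W(G,a,b)=W_+(G,a)$ finishes the argument.

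I do not expect any genuine obstacle here, since this is a direct corollary of Theorem \ref{splosen}; the only points deserving a moment's attention are bookkeeping ones. One should note that the codomain of $a_i$ is written as $\mathbb{R}$ in the statement, whereas the original weight $a$ takes values in $\mathbb{R}^+$; because $a_i(C)$ is a sum of positive reals it actually lands in $\mathbb{R}^+$, so no real discrepancy arises. Likewise, one verifies that the constant function $b\equiv 1$ indeed maps into $\mathbb{R}^+$, so that all hypotheses of Theorem \ref{splosen} are satisfied.
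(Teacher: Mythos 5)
Your proposal is correct and follows exactly the paper's own route: the paper proves this corollary by setting $b\equiv 1$ in Theorem \ref{splosen}, which is precisely what you do, only with the bookkeeping (that $b_i(C)=|V(C)|$ and that $W(G,a,b)=W_+(G,a)$ for constant $b$) spelled out more explicitly than in the paper's one-line proof.
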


\begin{proof}
If we set $b\equiv1$ in Theorem \ref{splosen}, we quickly get the obtained formula. \qed

%
%

\noindent

\end{proof}

\noindent
Finally, we are able to calculate the degree distance of an arbitrary graph $G$.
\begin{corollary} \label{rac_degree}
If $G$ is a connected graph with a partition $\{F_1,...,F_r\}$ coarser than $\Theta^*$-partition, 
then
$$DD(G)=\sum_{i=1}^r W(G/F_i, a_i, b_i),$$
where $a_i: V(G/F_i) \rightarrow \mathbb{R}$ is defined by $a_i(C)= \sum_{x \in V(C)} {\rm deg}(x)$ and $b_i: V(G/F_i) \rightarrow \mathbb{R}^+$ is defined by $b_i(C)= |V(C)|$ for all connected components $C$ of the graph $G \setminus F_i$.
\end{corollary}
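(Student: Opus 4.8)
The plan is to recognize the degree distance as a special instance of the additively weighted Wiener index $W_+$ and then to invoke the preceding corollary directly. First I would take the weight $a = \textrm{deg}$, that is, the function $V(G) \to \mathbb{R}$ sending each vertex to its degree, and observe that
$$W_+(G, \textrm{deg}) = \sum_{\{u,v\} \subseteq V(G)} (\textrm{deg}(u) + \textrm{deg}(v))\, d(u,v) = DD(G),$$
which is exactly the definition of the degree distance given in the introduction. Thus the degree distance is nothing more than $W_+$ evaluated at the degree weight.

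With this identification, the result follows at once from the previous corollary applied to the weighted graph $(G, \textrm{deg})$. That corollary states that for any partition $\{F_1, \ldots, F_r\}$ coarser than the $\Theta^*$-partition one has $W_+(G, a) = \sum_{i=1}^r W(G/F_i, a_i, b_i)$, where $a_i(C) = \sum_{x \in V(C)} a(x)$ and $b_i(C) = |V(C)|$. Substituting $a = \textrm{deg}$ turns $a_i$ into $a_i(C) = \sum_{x \in V(C)} \textrm{deg}(x)$ while $b_i$ is unchanged, and these are precisely the weight functions appearing in the statement to be proved.

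I do not expect a genuine obstacle here, since all the technical work is already carried out in the earlier theorem and corollary; the present claim is a pure specialization. The only point worth a brief remark is admissibility of the weight: the codomain in the previous corollary is $\mathbb{R}$ rather than $\mathbb{R}^+$, so no positivity of $\textrm{deg}$ is required, and in any case every vertex of a connected graph on at least two vertices has positive degree. Hence choosing the degree function as the additive weight completes the argument.
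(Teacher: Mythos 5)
Your proof is correct and is exactly the argument the paper intends: the corollary is stated without explicit proof precisely because it is the specialization $a = \mathrm{deg}$ of the preceding corollary (itself Theorem \ref{splosen} with $b \equiv 1$), which is what you carry out. Your remark on the codomain of the weight is a reasonable observation but does not affect the argument.
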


If $G$ is a partial cube and $E_1, \ldots, E_k$ are the $\Theta$-classes of $G$, we know that the graph $G \setminus E_i$ has exactly two connected components for any $i \in \lbrace 1,\ldots, k \rbrace$ and these two components will be denoted by $C_i^1$ and $C_i^2$. Moreover, for any $i \in \lbrace 1, \ldots, k \rbrace$ and $j \in \lbrace 1,2 \rbrace$ we define $A_j(E_i) = \sum_{x \in V(C_i^j)} a(x)$ and $B_j(E_i) = \sum_{x \in V(C_i^j)} b(x)$. The next result follows directly from Theorem \ref{splosen} and generalizes Lemma 4.2 from \cite{glavni} where it was stated just for trees.

\begin{corollary} \label{rac_delne}
If $(G,a,b)$ is a double vertex-weighted partial cube and if $E_1,\ldots,E_k$ are the $\Theta$-classes of $G$, then
$$W(G,a,b)=\sum_{i=1}^k \left( A_1(E_i)B_2(E_i) + A_2(E_i)B_1(E_i) \right).$$
\end{corollary}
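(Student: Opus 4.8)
The plan is to invoke Theorem~\ref{splosen} directly, taking the coarser partition to be the $\Theta^*$-partition itself, namely $\{E_1,\ldots,E_k\}$. Since $G$ is a partial cube, Theorem~\ref{th:partial-k}(iii) guarantees that $\Theta=\Theta^*$, so the sets $E_1,\ldots,E_k$ are the blocks of a partition coarser than (in fact equal to) the $\Theta^*$-partition, and Theorem~\ref{splosen} applies to give
$$W(G,a,b)=\sum_{i=1}^k W(G/E_i,a_i,b_i).$$
It therefore suffices to evaluate each summand $W(G/E_i,a_i,b_i)$ separately.

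First I would use the structural fact recalled just after Theorem~\ref{th:partial-k}: for a partial cube, deleting a single $\Theta$-class $E_i$ leaves exactly two connected components, which are precisely $C_i^1$ and $C_i^2$. Consequently the quotient graph $G/E_i$ has exactly two vertices, one for each component, and these are adjacent because some edge of $E_i$ joins a vertex of $C_i^1$ to a vertex of $C_i^2$ in $G$. Hence $G/E_i\cong K_2$, so the distance between its two vertices equals $1$.

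With this in hand, the computation of $W(G/E_i,a_i,b_i)$ is immediate from the definition of the Wiener index of a double vertex-weighted graph: writing $X=C_i^1$ and $Y=C_i^2$ for the two vertices of $G/E_i$, the single unordered pair contributes
$$W(G/E_i,a_i,b_i)=\bigl(a_i(X)b_i(Y)+a_i(Y)b_i(X)\bigr)\,d_{G/E_i}(X,Y).$$
Now I would substitute $d_{G/E_i}(X,Y)=1$ together with the identifications $a_i(C_i^1)=\sum_{x\in V(C_i^1)}a(x)=A_1(E_i)$ and $b_i(C_i^2)=\sum_{x\in V(C_i^2)}b(x)=B_2(E_i)$, and symmetrically $a_i(C_i^2)=A_2(E_i)$, $b_i(C_i^1)=B_1(E_i)$, all of which come straight from the definitions of $a_i,b_i$ and of $A_j,B_j$. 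This yields $W(G/E_i,a_i,b_i)=A_1(E_i)B_2(E_i)+A_2(E_i)B_1(E_i)$, and summing over $i$ gives the claimed formula.

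The argument is short because the real content is inherited from Theorem~\ref{splosen}; the only step genuinely specific to partial cubes is the reduction of each quotient to $K_2$. Thus the main thing to be careful about is citing the two-component property correctly and checking that the two quotient vertices are indeed adjacent (distance exactly $1$, not merely finite), so that the factor $d_{G/E_i}(X,Y)=1$ is justified. No serious obstacle is expected beyond this bookkeeping.
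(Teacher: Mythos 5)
Your proposal is correct and follows exactly the route the paper intends: the paper gives no written proof, stating only that the corollary ``follows directly from Theorem~\ref{splosen}'', and your argument---taking the $\Theta$-partition itself (valid since $\Theta=\Theta^*$ for partial cubes), noting each $G/E_i\cong K_2$ by the two-component property, and evaluating the single-pair Wiener index---is precisely the omitted bookkeeping. No gaps.
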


\section{Applications to phenylenes}

In this section, we apply Corollary \ref{rac_degree} and Theorem \ref{rac_wie} to obtain the relationships between the degree-distance and the Gutman index of a phenylene with the weighted Wiener indices of its hexagonal squeeze and the inner dual. Moreover, it is described how the two indices can be computed from four weighted quotient trees which enables us to compute the indices in linear time.

Firstly, we need to introduce some additional notation. Let ${\cal H}$ be the hexagonal (graphite) lattice and let $Z$ be a cricuit on it. Then a {\em benzenoid system} is induced by the vertices and edges of ${\cal H}$, lying on $Z$ and in its interior. Let $B$ be a benzenoid system. A vertex shared by three hexagons of $B$ is called an \textit{internal} vertex of $B$. A benzenoid system is said to be \textit{catacondensed} if it does not possess internal vertices. Otherwise it is called \textit{pericondensed}. Two distinct hexagons with a common edge are called \textit{adjacent}. The \textit{inner dual} of a benzenoid system $B$, denoted as $ID(B)$, is a graph which has hexagons of $B$ as vertices, two being adjacent whenever  the corresponding hexagons are adjacent. Obviously, the inner dual of a catacondensed benzenoid system is always a tree.

Let $B$ be a catacondensed benzenoid system. If we add rectangles between all pairs of adjacent hexagons of $B$, the obtained graph $G$ is called a \textit{phenylene}. We then say that $B$ is a \textit{hexagonal squeeze} of $G$ and denote it by $HS(G)=B$, see Figure \ref{ben_phe}.

\begin{figure}[h] 
\begin{center}
\includegraphics[scale=0.7]{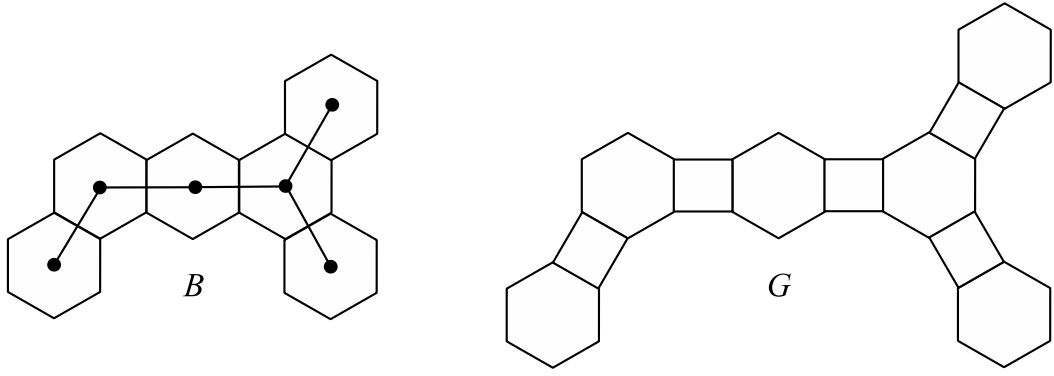}
\end{center}
\caption{\label{ben_phe} A benzenoid system $B$ with its inner dual and a phenylene $G$ such that $B=HS(G)$.}
\end{figure}

By Theorem \ref{th:partial-k} it follows that any benzenoid system or a phenylene is a partial cube. An \textit{elementary cut} $C$ of a benzenoid system or a phenylene $G$ is a line segment that starts at
the center of a peripheral edge of $G$,
goes orthogonal to it and ends at the first next peripheral
edge of $G$. By $C$ we usually also denote the set of edges that are intersected by the corresponding elementary cut. Elementary cuts in benzenoid systems have been
described and illustrated by numerous examples in several
earlier articles. The main insight for our consideration
is that every $\Theta$-class of a benzenoid system or a phenylene  coincides with exactly one of its elementary cuts. 

Let $G$ be a phenylene, $B=HS(G)$, and $T=ID(B)$. Firstly, we introduce four different weights in the following way. The weights $\omega_1,\omega_2 : V(B) \rightarrow \mathbb{R}^+$ are defined as 
$$w_1(u)= 4{\rm deg}(u) - 6,$$
$$w_2(u)= {\rm deg}(u) - 1$$
for any $u \in V(B)$. Moreover, the weights $\omega_3,\omega_4 : V(T) \rightarrow \mathbb{R}^+$ are defined as
 $$w_3(x)= 2{\rm deg}(x) + 12,$$
$$w_4(x)= 6$$
for any $x \in V(T)$.

\begin{theorem} If $G$ is a phenylene, $B=HS(G)$, and $T=ID(B)$, then it holds
$$DD(G)= W(B,w_1,w_2) + W(T,w_3,w_4),$$
$$Gut(G)= W(B,w_1)+W(T,w_3).$$
\end{theorem}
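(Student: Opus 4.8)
The plan is to realise both indices as weighted Wiener indices carrying the weight $\deg_G$ and then apply the partial cube cut method. Since $DD(G)=W_+(G,\deg_G)=W(G,\deg_G,b)$ with $b\equiv 1$ and $G$ is a partial cube, Corollary~\ref{rac_delne} writes
$DD(G)=\sum_E\bigl(A_1(E)\,|C^2_E|+A_2(E)\,|C^1_E|\bigr)$,
where the sum runs over the $\Theta$-classes $E$ of $G$, the sets $C^1_E,C^2_E$ are the two components of $G\setminus E$, and $A_j(E)=\sum_{v\in C^j_E}\deg_G(v)$. Likewise $Gut(G)=W(G,\deg_G)$, so Theorem~\ref{rac_wie} with the partition into $\Theta$-classes (each quotient $G/E$ being a $K_2$ at distance $1$) gives $Gut(G)=\sum_E A_1(E)A_2(E)$. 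Applying the very same partial cube formulas to $B$ and to the tree $T$ expresses the right-hand sides as sums over the elementary cuts of $B$ and over the edges of $T$ (the $\Theta$-classes of a tree being its individual edges). Thus it suffices to match the $\Theta$-classes of $G$ with (elementary cuts of $B$)\,$\cup$\,(edges of $T$) and compare the contributions one cut at a time.

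The structural core is a dictionary between $G$, $B$ and $T$. Because $B$ is catacondensed, every vertex of $B$ has degree $2$ or $3$, the degree-$3$ vertices being exactly the endpoints of edges shared by two hexagons. Forming $G$ replaces each shared edge by a square, so each degree-$2$ vertex of $B$ corresponds to a single degree-$2$ vertex of $G$, while each degree-$3$ vertex splits into two degree-$3$ vertices of $G$, one in each incident hexagon; consequently the vertex sets of the hexagons of $G$ are pairwise disjoint and each has exactly $6$ vertices. Using the identification of $\Theta$-classes with elementary cuts, I will show that the cuts of $G$ split into two families: the \emph{hexagonal} cuts, each the image of a unique elementary cut $C$ of $B$, and the \emph{square} cuts, each consisting of the two edges of one square that are not shared with a hexagon, hence in bijection with the edges of $T$. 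Moreover a hexagonal cut partitions $V(G)$ into the two sets corresponding, under the dictionary, to the two sides of $C$ in $B$ (both split halves of a bridgehead staying together, since the square's non-hexagonal edges are not removed), and a square cut partitions $V(G)$ into the vertices of the two groups of hexagons obtained by deleting the matching edge from $T$.

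Granting this dictionary, the proof collapses to four local degree identities. For a set $S\subseteq V(B)$ with image $S'\subseteq V(G)$, since $\deg_B\in\{2,3\}$ a degree-$2$ vertex contributes one $G$-vertex of degree $2$ and a degree-$3$ vertex contributes two $G$-vertices of degree $3$, whence $|S'|=\sum_{u\in S}(\deg_B(u)-1)=\sum_{u\in S}w_2(u)$ and $\sum_{v\in S'}\deg_G(v)=\sum_{u\in S}(4\deg_B(u)-6)=\sum_{u\in S}w_1(u)$. For a hexagon $H$, exactly $2\deg_T(H)$ of its six vertices are degree-$3$ (two per incident square) and the rest degree-$2$, so $|V(H)|=6=w_4(H)$ and $\sum_{v\in V(H)}\deg_G(v)=3\cdot 2\deg_T(H)+2\bigl(6-2\deg_T(H)\bigr)=2\deg_T(H)+12=w_3(H)$. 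Substituting these into a hexagonal cut turns $A_1(E)|C^2_E|+A_2(E)|C^1_E|$ into the $(w_1,w_2)$-contribution of $C$ to $W(B,w_1,w_2)$ and $A_1(E)A_2(E)$ into the $w_1$-contribution to $W(B,w_1)$; substituting them into a square cut turns the same two expressions into the $(w_3,w_4)$- and $w_3$-contributions of the corresponding edge to $W(T,w_3,w_4)$ and $W(T,w_3)$. Summing over all $\Theta$-classes of $G$ and regrouping by family yields both claimed identities at once.

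The only genuinely nontrivial step is the dictionary of the second paragraph: establishing the bijection between the $\Theta$-classes of $G$ and (elementary cuts of $B$)\,$\cup$\,(edges of $T$), and verifying that each cut of $G$ separates the vertices exactly as its partner separates $B$ or $T$. Everything after that is the routine counting of the third paragraph together with the already-established cut formulas. I would justify the dictionary by tracking an elementary cut through a single square: the two edges that the cut shares with the two fused hexagons become $\Theta$-equivalent across the square, extending a cut of $B$ to one hexagonal cut of $G$, while the remaining opposite pair of square edges constitutes the isolated square cut attached to that edge of $T$. A small instance such as biphenylene, whose hexagonal squeeze is naphthalene, makes the accounting transparent and serves as a check on the constants $4,-6,-1,2,12,6$.
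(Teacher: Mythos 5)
Your proof is correct, but it follows a genuinely different route from the paper's. The paper works with the coarse two-set partition $\{F_1,F_2\}$ of $E(G)$, where $F_1$ is the set of hexagon edges of $G$ and $F_2$ the set of remaining square edges: each $F_i$ is a union of elementary cuts, so Corollary~\ref{rac_degree} and Theorem~\ref{rac_wie} apply in a single stroke, and the entire content of the proof is the identification of the quotients $G/F_1\cong B$ and $G/F_2\cong T$ together with the computation of the induced component weights (the components of $G\setminus F_1$ are single degree-$2$ vertices or edges joining two degree-$3$ vertices, giving $a_1\equiv w_1$, $b_1\equiv w_2$; the components of $G\setminus F_2$ are hexagons, giving $a_2\equiv w_3$, $b_2\equiv w_4$). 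You instead use the finest partition: you apply the partial-cube cut formula (Corollary~\ref{rac_delne}, plus the $K_2$-quotient observation for the Gutman index) to $G$, $B$ and $T$ separately, and then match contributions cut by cut through your dictionary between the $\Theta$-classes of $G$ and (elementary cuts of $B$)~$\cup$~(edges of $T$). The structural information in your dictionary---the bijection together with the side correspondence, including the observation that the two halves of a split bridgehead vertex stay on the same side of a hexagonal cut---is morally equivalent to the paper's quotient isomorphisms, and your local identities $\sum_{v\in S'}\deg_G(v)=\sum_{u\in S}(4\deg_B(u)-6)$, $|S'|=\sum_{u\in S}(\deg_B(u)-1)$ and $\sum_{v\in V(H)}\deg_G(v)=2\deg_T(H)+12$ reproduce exactly the paper's weight computations. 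What the coarse partition buys is brevity: one invocation of the general machinery and the theorem is immediate, with no need to know that $B$ and $T$ are themselves partial cubes. What your termwise matching buys is explicitness at the level of individual cuts, and the fact that only the classical $\Theta$-class (partial cube) form of the cut method is needed rather than the coarser-partition generalization; the price is the extra bookkeeping of the bijection, which you rightly flag as the one nontrivial step and which you justify at roughly the same level of detail as the paper's own ``we notice that $G/F_1\cong B$ and $G/F_2\cong T$''.
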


\begin{proof}
Let $F_1$ be the set of all the edges of $G$ that correspond to the edges of the hexagonal squeeze $B$ (the edges of all the hexagons of $G$). Moreover, let $F_2 = E(G) \setminus F_1$. Since the sets $F_1,F_2$ are both unions of elementary cuts of $G$, it is obvious that the partition $\lbrace F_1, F_2 \rbrace$ is a partition coarser than $\Theta$-partition. Therefore, by Corollary \ref{rac_degree} we have
$$DD(G) = W(G / F_1, a_1,b_1) + W(G / F_2, a_2, b_2),$$
\noindent
where $a_i$, $i \in \lbrace 1,2 \rbrace$, represents the sum of all the degrees in the corresponding connected components of the graph $G \setminus F_i$, and $b_i$, $i \in \lbrace 1,2 \rbrace$, represents the number of vertices in the corresponding connected components of the graph $G \setminus F_i$.

We also notice that $G / F_1 \cong B$ and $G / F_2 \cong T$. Furthermore, the connected components of $G \setminus F_1$ are either vertices (the vertices of degree 2 in $G$, which correspond to vertices of degree 2 in $B$) or edges (composed of two vertices of degree 3 in $G$, which correspond to vertices of degree 3 in $B$). Therefore,
$$a_1(u) = \begin{cases}
6; & {\rm deg}(u) = 3\\
2; & {\rm deg}(u) = 2
\end{cases}, \quad 
b_1(u)=\begin{cases}
2; & {\rm deg}(u) = 3\\
1; & {\rm deg}(u) = 2
\end{cases}$$
for any $u \in V(B)$. Obviously, $a_1 \equiv w_1$ and $b_1 \equiv w_2$.

On the other hand, the connected components of $G \setminus F_2$ are the hexagons of $G$, each of them corresponds to exactly one vertex from $T$. Therefore,
$$a_2(x) = \begin{cases}
18; & {\rm deg}(x) = 3\\
16; & {\rm deg}(x) = 2\\
14; & {\rm deg}(x) = 1
\end{cases}, \quad 
b_2(x)=6
$$
for any $x \in V(T)$. Obviously, $a_2 \equiv w_3$ and $b_2 \equiv w_4$, which completes the proof for the degree distance.

For the Gutman index we consider the same partition $\lbrace F_1, F_2 \rbrace$ of the set $E(G)$. By Theorem \ref{rac_wie} we obtain
$$Gut(G) = W(G / F_1, a_1) + W(G / F_2, a_2),$$
\noindent
where $a_i$, $i \in \lbrace 1,2 \rbrace$, represents the sum of all the degrees in the corresponding connected components of the graph $G \setminus F_i$. Since $a_1 \equiv w_1$ and $a_2 \equiv w_3$, the proof is complete. \qed
\end{proof}

Another way to compute the degree distance and the Gutman index of a phenylene is by using four weighted quotient trees, which are defined in the following way.

Let $G$ be a phenylene and $B$ the hexagonal squeeze of $G$. The edge set of $B$ can be naturally partitioned into sets $E_1'$, $E_2'$, and $E_3'$ of edges of the same direction. Denote the sets of edges of $G$ corresponding to the edges in $E_1'$, $E_2'$, and $E_3'$ by $E_1, E_2$, and $E_3$, respectively. Moreover, let $E_4 = E(G) \setminus (E_1 \cup E_2 \cup E_3)$ be the set of all the edges of $G$ that do not belong to $B$. The quotient graph $T_i$, $1\le i\le 4$, is then defined in the standard way as the graph $G / E_i$. In a similar way we can define the quotient graphs $T_1', T_2', T_3'$ of the hexagonal squeeze $B$. It is known  that for any benzenoid system its quotient graphs are trees, see \cite{chepoi-1996}. Obviously, a tree $T_i'$ is isomorphic to $T_i$ for $i=1,2,3$ and $T_4$ is isomorphic to the inner dual of $B$. Therefore, all quotient graphs $T_1,T_2,T_3,T_4$ are trees.

Now we extend a quotient tree $T_i$, $i \in \lbrace 1,2,3,4 \rbrace$, to a double vertex-weighted tree $(T_i,a_i,b_i)$ as follows: 
\begin{itemize}
\item for $C \in V(T_i)$, let $a_i(C)$ be the sum of all the degrees of vertices in the connected component $C$ of $G \setminus E_i$;
\item for $C \in V(T_i)$, let $b_i(C)$ be the number of vertices in the component $C$ of $G \setminus E_i$.
\end{itemize}

\noindent
Everything is prepared for the following theorem.

\begin{theorem} If $G$ is a phenylene and $(T_i,a_i,b_i)$, $i \in \lbrace 1,2,3,4 \rbrace$, are the corresponding double vertex-weighted quotient trees, then it holds
$$DD(G)= W(T_1,a_1,b_1) + W(T_2,a_2,b_2) + W(T_3,a_3,b_3) + W(T_4,a_4,b_4),$$
$$Gut(G)= W(T_1,a_1) + W(T_2,a_2) + W(T_3,a_3) + W(T_4,a_4).$$
\end{theorem}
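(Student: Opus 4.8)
The plan is to recognize that the four $\Theta^*$-classes (elementary cut directions) of a phenylene give rise to a natural partition of $E(G)$, and then simply invoke the general cut-method results already established. Concretely, the sets $E_1, E_2, E_3$ (the three edge-directions inherited from the hexagonal squeeze $B$) together with $E_4 = E(G)\setminus(E_1\cup E_2\cup E_3)$ form a partition of $E(G)$. The first thing I would verify is that $\{E_1,E_2,E_3,E_4\}$ is coarser than the $\Theta^*$-partition of $G$. Since $G$ is a partial cube, every $\Theta$-class coincides with an elementary cut, and each $E_i$ is a union of parallel elementary cuts; hence each $E_i$ is a union of $\Theta$-classes, so the partition is coarser than the $\Theta$-partition, which is exactly what the hypotheses of Corollary~\ref{rac_degree} and Theorem~\ref{rac_wie} require.

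With the partition in hand, the degree-distance formula follows immediately from Corollary~\ref{rac_degree}: one writes
$$DD(G)=\sum_{i=1}^4 W(G/E_i, a_i, b_i),$$
and then observes that $G/E_i = T_i$ by the very definition of the quotient trees, while the weights $a_i$ and $b_i$ defined in Corollary~\ref{rac_degree} (sum of degrees in a component, and number of vertices in a component) coincide verbatim with the weights attached to the trees $(T_i,a_i,b_i)$ in the statement. For the Gutman index, the same partition is fed into Theorem~\ref{rac_wie} (the single-weighted cut method), giving
$$Gut(G)=\sum_{i=1}^4 W(G/E_i, a_i)=\sum_{i=1}^4 W(T_i,a_i),$$
where $a_i$ is again the component-degree-sum weight. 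The only subtlety worth spelling out is that $Gut$ and $DD$ really are the weighted Wiener indices being computed: taking $w(x)=\mathrm{deg}(x)$ makes $W(G,w)=W_*(G,w)=Gut(G)$, and taking $a(x)=\mathrm{deg}(x)$, $b\equiv 1$ makes $W(G,a,b)=W_+(G,a)=DD(G)$, which is exactly the specialization underlying Corollary~\ref{rac_degree}.

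I do not expect any real obstacle here; the work is essentially all in the preceding lemmas and corollaries. The main point requiring a sentence of justification is precisely the coarseness claim: one must note that the three directions $E_1',E_2',E_3'$ partition $E(B)$, that this transfers to a partition $E_1,E_2,E_3$ of the hexagon-edges of $G$, and that $E_4$ collects exactly the remaining (rectangle) edges, so that together they partition $E(G)$; and then that each class is a union of elementary cuts, hence of $\Theta$-classes. After that the identification $G/E_i\cong T_i$ is definitional, so both displayed formulas drop out by a single application of the respective cut theorem, and the proof is a matter of citing Corollary~\ref{rac_degree} and Theorem~\ref{rac_wie} and noting the weights match.
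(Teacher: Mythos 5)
Your proposal is correct and takes essentially the same route as the paper: observe that each $E_i$ is a union of elementary cuts of $G$, hence of $\Theta$-classes, so $\lbrace E_1,E_2,E_3,E_4 \rbrace$ is coarser than the $\Theta^*$-partition, and then apply Corollary \ref{rac_degree} for $DD(G)$ and Theorem \ref{rac_wie} for $Gut(G)$. The additional details you spell out (the identification $G/E_i \cong T_i$ and the matching of the weights) are exactly what the paper leaves implicit, since $T_i$ and its weights are defined as those quotients.
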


\begin{proof}
It is clear the each set $E_i$, $i \in \lbrace 1,2,3,4 \rbrace$, is the union of some elementary cuts of $G$. Therefore, the partition $\lbrace E_1, E_2, E_3, E_4 \rbrace$ is a partition coarser than $\Theta$-partition. The theorem now follows by Corollary \ref{rac_degree} and Theorem \ref{rac_wie}. \qed
\end{proof}

By the same reasoning as in \cite{chepoi-1996} we can show that the double vertex-weighted quotient trees of a phenylene can be computed in linear time with respect to the number of vertices. Moreover, it was shown in \cite{chepoi-1997,glavni} that the Wiener index of any weighted tree or double vertex-weighted tree can be computed in linear time with respect to the number of vertices of a tree. Therefore, we easily obtain the following corollary.

\begin{corollary}
If $G$ is a phenylene with $n$ vertices, then the degree distance of $G$ and the Gutman index of $G$ can be computed in $O(n)$ time.
\end{corollary}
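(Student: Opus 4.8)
The plan is to prove the final corollary by combining the two linear-time ingredients that the preceding discussion has already identified: first, the linear-time \emph{construction} of the four double vertex-weighted quotient trees $(T_i,a_i,b_i)$ of the phenylene $G$, and second, the linear-time \emph{evaluation} of the Wiener index on each such tree. Since the immediately preceding theorem expresses $DD(G)$ and $Gut(G)$ as a sum of exactly four quotient-tree Wiener indices, it suffices to show that every term in that sum can be produced and evaluated in $O(n)$ time, and that the number of terms (namely four) is a constant independent of $n$.

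First I would justify the construction step. Following the reasoning of \cite{chepoi-1996}, the $\Theta$-classes of the partial cube $G$ coincide with its elementary cuts, and these can be enumerated in linear time since $G$ is planar with maximum degree $3$ and its number of edges is $O(n)$. Grouping the elementary cuts into the four sets $E_1,E_2,E_3,E_4$ and forming each quotient graph $G/E_i$ amounts to computing connected components of $G\setminus E_i$, which is an $O(n)$ traversal; the weights $a_i(C)$ (sum of degrees in $C$) and $b_i(C)$ (number of vertices in $C$) are accumulated during the same traversal at no extra asymptotic cost. Thus each of the four double vertex-weighted trees $(T_i,a_i,b_i)$ is available in $O(n)$ time, and the total construction over all four indices $i$ is still $O(n)$ because four is a constant.

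Next I would invoke the evaluation step: by \cite{chepoi-1997,glavni} the Wiener index of a double vertex-weighted tree can be computed in time linear in its number of vertices. Each tree $T_i$ has at most $n$ vertices, so each of the four evaluations $W(T_i,a_i,b_i)$ runs in $O(n)$ time, and likewise $W(T_i,a_i)$ (the single-weight Wiener index needed for $Gut(G)$) is linear. Summing the four results to obtain $DD(G)$, respectively $Gut(G)$, is a constant-time arithmetic operation once the four values are known. Hence the entire pipeline — enumerate cuts, build the four weighted quotient trees, evaluate four tree Wiener indices, and add — runs in $O(n)$ total time, which is exactly the claimed bound.

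I do not expect a serious obstacle here, since the corollary is essentially a bookkeeping assembly of two cited linear-time primitives together with the decomposition theorem proved just above. The only point that warrants care, and the one I would state explicitly rather than gloss over, is that the constant number of summands (four) must not be allowed to hide a dependence on $n$: one should confirm that each weight $a_i,b_i$ is computed once per component during a single graph traversal, so that no repeated per-pair summation over $V(G)$ sneaks in and inflates the running time. With that observation made, the bound $O(n)$ follows immediately.
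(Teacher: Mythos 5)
Your proposal is correct and follows essentially the same route as the paper: combine the preceding decomposition theorem with the linear-time construction of the four double vertex-weighted quotient trees (by the reasoning of Chepoi's algorithm for benzenoid systems) and the cited linear-time evaluation of the (double vertex-)weighted Wiener index on trees. The paper states this in exactly the same way, only more tersely; your additional bookkeeping about accumulating the weights during a single traversal is a fair elaboration of what the citation to \cite{chepoi-1996} is meant to cover.
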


For an example, we consider phenylene $G$ from Figure \ref{ben_phe}. The double vertex-weighted quotient trees for $G$ are depicted in Figure \ref{Wiener_trees}. 

\begin{figure}[h] 
\begin{center}
\includegraphics[scale=0.6]{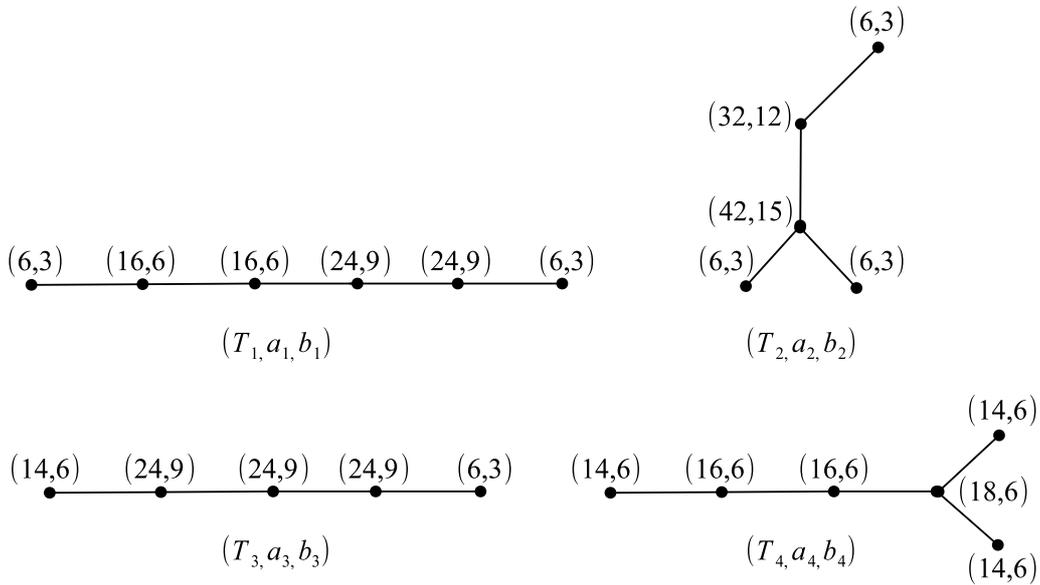}
\end{center}
\caption{\label{Wiener_trees} The double vertex-weighted quotient trees of phenylene $G$.}
\end{figure}

\noindent
We next compute the quantities (note that this can be done by using Corollary \ref{rac_delne}):
$$W(T_1, a_1,b_1) = 5208, W(T_2,a_2,b_2) = 2976, W(T_3,a_3,b_3) = 4416, W(T_4,a_4,b_4) = 5784.$$

\noindent
Therefore, the degree distance of $G$ is the sum

$$DD(G)=5208+2976+4416+5784=18384.$$

\noindent
Analogously, to compute the Gutman index we first calculate:
$$W(T_1, a_1) = 6484, W(T_2,a_2) = 3600, W(T_3,a_3) = 5520, W(T_4,a_4) = 7252.$$

\noindent
Finally, the Gutman index of $G$ is the sum

$$Gut(G)=6484+3600+5520+7252=22856.$$

\section{Reduction theorems for connected double vertex-weighted graphs}

We prove results analogous to the result from \cite{redukcija} for calculating the weighted Wiener index by a special reduction. To state the theorems, some additional definitions are needed.

\noindent
If $G$ is a graph, then vertices $x$ and $y$ are in relation $R$ if $N(x) = N(y)$. Obviously, $R$ is an equivalence
relation on $V(G)$. The $R$-equivalence class containing $x$ will be denoted with $[x]_R$. 

\noindent
Let $G$ be a connected graph,
$c \in V (G)$ and $C = [c]_R$. We define a new graph $G'$ with $G' = G \setminus (C \setminus \{c \})$. For any weight $w: V(G) \rightarrow \mathbb{R}^+$ we define $w': V(G') \rightarrow \mathbb{R}^+$ in the following way: $\omega'(c)= \sum_{x \in C}\omega(x)$ and $\omega'(x) = w(x)$
for any $x \notin C$. The next theorem was recently obtained.

\begin{theorem} \cite{redukcija} \label{posebna_redukcija0}
Let $(G,w)$ be a connected weighted graph, $c \in V(G)$, and $C=[c]_R = \{c_1,\ldots,c_k\}$. Then $$W(G,w)= W(G',w') + \sum_{\{c_i,c_j\} \subseteq C}2w(c_i)w(c_j).$$
\end{theorem}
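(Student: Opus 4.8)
The plan is to reduce everything to the observation that the members of $C=[c]_R$ are mutually non-adjacent \emph{twins}: since $N(c_i)=N(c_j)$ for all $i,j$, an adjacency $c_i\sim c_j$ would force $c_j\in N(c_j)$, impossible in a simple graph; hence $C$ is independent and the common neighbourhood $N:=N(c)$ satisfies $N\cap C=\emptyset$. I would then record three distance facts. (i) For $i\neq j$, $d_G(c_i,c_j)=2$, since the two vertices are non-adjacent but share a neighbour (as $G$ is connected on at least two vertices, $N\neq\emptyset$). (ii) For any $z\in V(G)\setminus C$, the distance $d_G(z,c_i)$ is independent of $i$: it equals $1$ if $z\in N$ and $1+\min_{u\in N}d_G(z,u)$ otherwise, a value depending only on $N$; I write $d_G(z,c)$ for it. (iii) Most importantly, $G'$ preserves distances, i.e. $d_{G'}(u,v)=d_G(u,v)$ for all $u,v\in V(G')=(V(G)\setminus C)\cup\{c\}$.

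Fact (iii) is the crux and the step I expect to be the main obstacle. For $u,v\in V(G)\setminus C$ I would argue that a shortest $u$--$v$ path in $G$ meets $C$ at most once: if it passed through two members of $C$, each flanked on the path by neighbours lying in $N$, these could be joined by a length-$2$ shortcut through $c$, contradicting minimality. If such a path meets a single $c_i$ with $i\geq 2$ internally, its two path-neighbours $u',u''$ lie in $N=N(c)$, so replacing the detour $u'\to c_i\to u''$ by $u'\to c\to u''$ yields an equally short path contained in $V(G')$; hence $d_{G'}(u,v)=d_G(u,v)$. The only remaining distances are those to $c$ itself, and these follow from the neighbourhood formula of (ii): because $c$ retains neighbourhood $N$ in $G'$ and $N\subseteq V(G)\setminus C$, the already-established equalities $d_{G'}(z,u)=d_G(z,u)$ for $u\in N$ give $d_{G'}(z,c)=d_G(z,c)$.

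With these facts in hand I would split the unordered pairs defining $W(G,w)=\sum_{\{u,v\}}w(u)w(v)d_G(u,v)$ into three blocks: both endpoints outside $C$, exactly one endpoint in $C$, and both endpoints in $C$. The first block equals $\sum_{\{u,v\}\subseteq V(G)\setminus C}w(u)w(v)d_G(u,v)$, which by (iii) and $w'=w$ off $C$ is exactly the part of $W(G',w')$ coming from pairs that avoid $c$. For the mixed block I would collapse the class using (ii):
$$\sum_{z\in V(G)\setminus C}\sum_{i=1}^{k}w(z)w(c_i)\,d_G(z,c_i)=\sum_{z\in V(G)\setminus C}w(z)\,d_G(z,c)\sum_{i=1}^{k}w(c_i)=\sum_{z\in V(G)\setminus C}w'(z)\,w'(c)\,d_{G'}(z,c),$$
where the last equality uses $w'(c)=\sum_{x\in C}w(x)$, $w'(z)=w(z)$, and (iii); this is precisely the part of $W(G',w')$ coming from pairs that contain $c$ (recall $V(G')\setminus\{c\}=V(G)\setminus C$). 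The first two blocks therefore sum to $W(G',w')$.

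Finally, by (i) the within-$C$ block equals $\sum_{\{c_i,c_j\}\subseteq C}w(c_i)w(c_j)\,d_G(c_i,c_j)=\sum_{\{c_i,c_j\}\subseteq C}2\,w(c_i)w(c_j)$, and since only $c$ survives in $G'$ no such pair occurs in $W(G',w')$; this is exactly the leftover correction term. Adding the three blocks gives $W(G,w)=W(G',w')+\sum_{\{c_i,c_j\}\subseteq C}2\,w(c_i)w(c_j)$, as claimed. The case $k=1$ needs no separate treatment: the correction sum is empty, $w'(c)=w(c)$, and $G'=G$, so the identity is trivial. Once the twin structure is recognized and the rerouting argument for (iii) is made precise, the rest is a bookkeeping split of the defining double sum, with (ii) doing the work of compressing the class $C$ into the single reweighted vertex $c$.
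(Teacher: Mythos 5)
Your proof is correct and takes essentially the same route as the paper: although the paper cites this statement from the literature without reproving it, its proof of the double vertex-weighted generalization (Theorem \ref{posebna_redukcija}) rests on exactly your three distance facts --- equal distances from all of $C$ to any outside vertex, distance preservation in $G'$, and $d_G(c_i,c_j)=2$ --- followed by the same three-block split of the defining sum. The only difference is that you actually justify these facts (in particular the rerouting argument showing $d_{G'}(u,v)=d_G(u,v)$), which the paper simply asserts.
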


\noindent
Finally, we can state the main theorem of the section.

\begin{theorem} \label{posebna_redukcija}
Let $(G,a,b)$ be a connected double vertex-weighted graph, $c \in V(G)$, and $C=[c]_R = \{c_1,\ldots,c_k\}$. Then $$W(G,a,b)= W(G',a',b') + \sum_{\{c_i,c_j\} \subseteq C}2(a(c_i)b(c_j)+a(c_j)b(c_i)).$$
\end{theorem}

\begin{proof}
If $|C|=1$, $(G',\omega')=(G,\omega)$, hence the result is trivial. Let  $c_1=c$ and $k \geq 2$. Then, we obtain
	
	\begin{itemize}
		\item [$(i)$] $d_G(c_i,x)=d_G(c_j,x)$ holds for any $c_i, c_j \in C$ and $x \notin C$,
			\item [$(ii)$] $d_G(x,y)=d_{G'}(x,y)$ holds for any vertices $x,y \in V(G) \setminus C$,
		\item [$(iii)$] $d_G(c_i,c_j)=2$ holds for any $c_i, c_j \in C$, $i \neq j$.

	\end{itemize}

	Using these facts we can compute the Wiener index of $(G,a,b)$ as follows:

\begin{eqnarray*}
W(G,a,b)&=& \sum_{\{x,y\} \subseteq V(G)}(a(x)b(y)+a(y)b(x))d_G(x,y) \\
&=& \sum_{x \notin C}\sum_{i=1}^k \left( a(c_i)b(x) + a(x)b(c_i) \right) d_G(c_i,x)  \\
&+& \sum_{\{x,y\} \subseteq V(G)\setminus C}(a(x)b(y)+a(y)b(x))d_G(x,y)  \\
&+& \sum_{\{c_i,c_j\} \subseteq C}(a(c_i)b(c_j)+a(c_j)b(c_i))d_G(c_i,c_j)  \\
&=& \sum_{x \notin C} \left( b(x)\sum_{i=1}^k a(c_i)d_G(c_i,x)+ a(x)\sum_{i=1}^k b(c_i)d_G(c_i,x) \right)  \\
&+& \sum_{\{x,y\} \subseteq V(G)\setminus C}(a(x)b(y)+a(y)b(x))d_G(x,y)  \\
&+& \sum_{\{c_i,c_j\} \subseteq C}(a(c_i)b(c_j)+a(c_j)b(c_i))d_G(c_i,c_j). 
\end{eqnarray*}
Therefore, applying properties $(i)$, $(ii)$, and $(iii)$ we deduce
\begin{eqnarray*}
W(G,a,b) &=& \sum_{x \in V(G') \setminus \lbrace c \rbrace} \left( b'(x)a'(c) + a'(x)b'(c) \right) d_{G'}(c,x) + \\
&+& \sum_{\{x,y\} \subseteq V(G')\setminus \lbrace c \rbrace}(a'(x)b'(y)+a'(y)b'(x))d_{G'}(x,y) + \\
&+& \sum_{\{c_i,c_j\} \subseteq C}2(a(c_i)b(c_j)+a(c_j)b(c_i)) \\
& = & W(G',a',b') + \sum_{\{a_i,a_j\} \subseteq C}2(a(a_i)b(a_j)+a(a_j)b(a_i))
\end{eqnarray*}
and the proof is complete. \qed
\end{proof}

\begin{corollary} \label{posledica_redukcije}
Let $(G,a,b)$ be a connected double vertex-weighted graph and let $a(c_i)=k_1$ and $b(c_i)=k_2$ for all $c_i \in C$, $k_1, k_2 \in \mathbb{R^+}$.
Then $$W(G,a,b)= W(G',a',b') + 2k_1k_2|C|(|C|-1).$$
\end{corollary}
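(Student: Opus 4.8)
The plan is to derive this corollary directly from Theorem \ref{posebna_redukcija}, which already gives the reduction formula for arbitrary weights $a$ and $b$. The reduced graph $(G',a',b')$ is constructed in exactly the same way irrespective of the special form of the weights, so the term $W(G',a',b')$ transfers verbatim; the only work is to evaluate the correction sum
$$\sum_{\{c_i,c_j\} \subseteq C} 2\bigl(a(c_i)b(c_j) + a(c_j)b(c_i)\bigr)$$
under the hypothesis $a(c_i)=k_1$ and $b(c_i)=k_2$ for every $c_i \in C$.

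First I would substitute the constant values into a single summand. Since $a(c_i)=a(c_j)=k_1$ and $b(c_i)=b(c_j)=k_2$, the bracket $a(c_i)b(c_j)+a(c_j)b(c_i)$ equals $k_1k_2 + k_1k_2 = 2k_1k_2$, so each summand contributes $2\cdot 2k_1k_2 = 4k_1k_2$, independently of which pair $\{c_i,c_j\}$ is chosen. Next I would count the index set: the number of unordered pairs $\{c_i,c_j\} \subseteq C$ is $\binom{|C|}{2} = \tfrac{1}{2}|C|(|C|-1)$. Multiplying the constant contribution by the number of pairs gives $4k_1k_2 \cdot \tfrac{1}{2}|C|(|C|-1) = 2k_1k_2\,|C|(|C|-1)$, which is precisely the stated correction term. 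Substituting this back into Theorem \ref{posebna_redukcija} then yields the claimed identity.

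There is essentially no genuine obstacle here beyond careful bookkeeping of the multiplicative constants, namely keeping the factor $2$ coming from Theorem \ref{posebna_redukcija} together with the factor $2$ produced by the symmetric expression $a(c_i)b(c_j)+a(c_j)b(c_i)$, and correctly converting the binomial coefficient $\binom{|C|}{2}$ into $\tfrac{1}{2}|C|(|C|-1)$. Once these constants are tracked, the corollary follows immediately as a specialization of the preceding theorem.
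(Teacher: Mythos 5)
Your proposal is correct and follows exactly the intended route: the paper states this corollary as an immediate consequence of Theorem \ref{posebna_redukcija} (with no separate proof given), and your substitution of the constant weights, giving $4k_1k_2$ per pair times $\binom{|C|}{2}=\tfrac{1}{2}|C|(|C|-1)$ pairs, is precisely the bookkeeping that justifies it.
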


In a similar way as before, we can introduce relation $S$ in the following way, see \cite{redukcija}. If $G$ is a graph, then vertices $x$ and $y$ are in relation $S$ if $N_G[x] = N_G[y]$. Obviously, $S$ is an equivalence
relation on $V(G)$ and the $S$-equivalence class containing $x$ will be denoted with $[x]_S$. Analogously as before we can define $(G',a',b')$ for any double vertex-weighted graph $(G,a,b)$ and $c \in V(G)$. The arguments in the proof of the next theorem are parallel to the proof of Theorem \ref{posebna_redukcija},
the only difference is that $d_G(c_i, c_j )$ equals one whenever $c_i,c_j$ are two distinct vertices from $C=[c]_S$.

\begin{theorem} 
Let $(G,a,b)$ be a connected double vertex-weighted graph, $c \in V(G)$, and $C=[c]_S = \{c_1,\ldots,c_k\}$. Then $$W(G,a,b)= W(G',a',b') + \sum_{\{c_i,c_j\} \subseteq C}(a(c_i)b(c_j)+a(c_j)b(c_i)).$$
\end{theorem}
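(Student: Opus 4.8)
The plan is to follow verbatim the structure of the proof of Theorem~\ref{posebna_redukcija}, changing only the facts about distances inside the class $C=[c]_S$. As there, I would first dispose of the trivial case $|C|=1$, where $(G',a',b')=(G,a,b)$ and the correction sum is empty, and then assume $k\ge 2$ with $c_1=c$. I would record the three distance facts that drive the computation:
\begin{itemize}
\item[$(i)$] $d_G(c_i,x)=d_G(c_j,x)$ for all $c_i,c_j\in C$ and $x\notin C$;
\item[$(ii)$] $d_G(x,y)=d_{G'}(x,y)$ for all $x,y\in V(G)\setminus C$;
\item[$(iii)$] $d_G(c_i,c_j)=1$ for all distinct $c_i,c_j\in C$.
\end{itemize}
The heart of the adaptation is $(iii)$: since $N_G[c_i]=N_G[c_j]$ and $c_i\in N_G[c_i]$, we get $c_i\in N_G[c_j]=N(c_j)\cup\{c_j\}$, so for $c_i\neq c_j$ necessarily $c_i\in N(c_j)$, i.e.\ the two vertices are adjacent. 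This is exactly where the class $[c]_S$ differs from $[c]_R$: distinct $S$-twins lie at distance $1$ rather than $2$.

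For $(i)$ I would argue by symmetry. From $N_G[c_i]=N_G[c_j]$ it follows that $c_i$ and $c_j$ have the same neighbours outside $\{c_i,c_j\}$ and are moreover adjacent, so the transposition swapping $c_i$ and $c_j$ and fixing every other vertex is an automorphism of $G$; applying it to a geodesic yields $d_G(c_i,x)=d_G(c_j,x)$ for every $x\notin\{c_i,c_j\}$, in particular for every $x\notin C$. Fact $(ii)$ is the standard observation that deleting the twins $C\setminus\{c\}$ leaves distances among the surviving vertices unchanged, since any geodesic through a removed $c_i$ can be rerouted through $c$ using $(i)$ and the adjacencies just described; the same reasoning gives $d_G(c,x)=d_{G'}(c,x)$ for $x\notin C$.

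With these facts in hand the computation is identical to that of Theorem~\ref{posebna_redukcija}. I would split the defining sum for $W(G,a,b)$ into the contribution of pairs $\{c_i,x\}$ with $x\notin C$, of pairs $\{x,y\}\subseteq V(G)\setminus C$, and of pairs $\{c_i,c_j\}\subseteq C$. Using $(i)$ together with $a'(c)=\sum_{x\in C}a(x)$ and $b'(c)=\sum_{x\in C}b(x)$, the first block collapses to the terms of $W(G',a',b')$ that involve $c$; by $(ii)$ the second block is exactly the remaining terms of $W(G',a',b')$; and by $(iii)$ the third block equals $\sum_{\{c_i,c_j\}\subseteq C}(a(c_i)b(c_j)+a(c_j)b(c_i))\cdot 1$, which is the claimed correction term, now with coefficient $1$ in place of the coefficient $2$ of Theorem~\ref{posebna_redukcija}. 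Summing the three blocks gives $W(G,a,b)=W(G',a',b')+\sum_{\{c_i,c_j\}\subseteq C}(a(c_i)b(c_j)+a(c_j)b(c_i))$.

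The only genuinely new point, and the step I would be most careful to justify, is establishing $(iii)$ and the automorphism underlying $(i)$ for closed neighbourhoods; everything after that is the same bookkeeping as before, with the single factor $d_G(c_i,c_j)$ changed from $2$ to $1$.
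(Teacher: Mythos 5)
Your proposal is correct and follows exactly the route the paper intends: the paper does not even write out this proof, remarking only that the argument is parallel to that of Theorem~\ref{posebna_redukcija} with $d_G(c_i,c_j)=1$ replacing $d_G(c_i,c_j)=2$, which is precisely the adaptation you carry out. Your write-up in fact supplies more detail than the paper, since you also justify facts $(i)$--$(iii)$ for closed neighbourhoods (adjacency of distinct $S$-twins and the swapping automorphism), all of which is sound.
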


In the rest of the section we use the obtained results on a family of graphs $G_n$, $n \geq 2$, where $n$ is the number of vertical layers, see Figure \ref{gn}. These graphs where introduced in \cite{nad_klav}, where the Wiener index was computed. Later, the obtained result was corrected in \cite{redukcija}. First, we determine the $\Theta^*$-classes of $G_n$, which are denoted by $E_1, \ldots, E_{n-1}$, see Figure \ref{gn}.

\begin{figure}[h] 
\begin{center}
\includegraphics[scale=0.6]{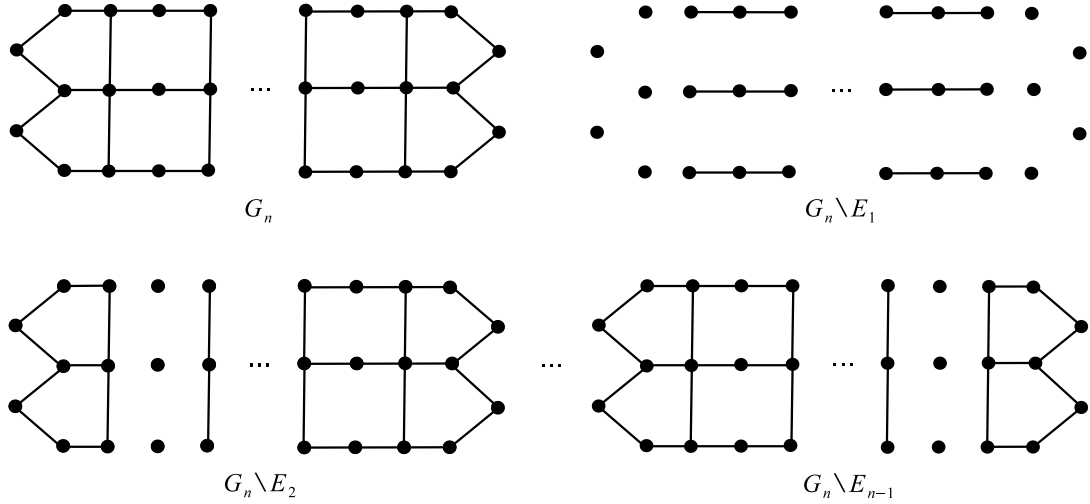}
\end{center}
\caption{\label{gn} Graph $G_n$, $n \geq 2$, and the subgraphs $G_n \setminus E_i$, $i \in \lbrace 1, \ldots, n-1 \rbrace$.}
\end{figure}

\noindent
Moreover, let $F_1=E_1$ and $F_2 = \bigcup_{i=2}^{n-1}E_i$. The graph $G_n \setminus F_2$ is depicted in Figure \ref{gn_zdruzeni_razred}. In addition, Figures \ref{kvocientniE1} and \ref{kvocientniE2} show quotient graphs $(G_n / F_1,a_1,b_1)$ and $(G_n/F_2,a_2,b_2)$, where the weights are defined as in Corollary \ref{rac_degree}.

\begin{figure}[h] 
\begin{center}
\includegraphics[scale=0.8]{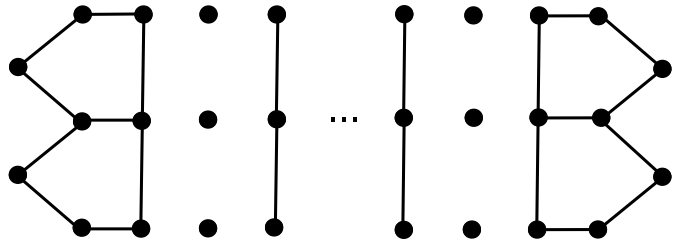}
\end{center}
\caption{\label{gn_zdruzeni_razred} The graph $G_n \setminus F_2$, where $F_2 = \bigcup_{i=2}^{n-1}E_i$.}
\end{figure}

\begin{figure}[h] 
\begin{center}
\includegraphics[scale=0.65]{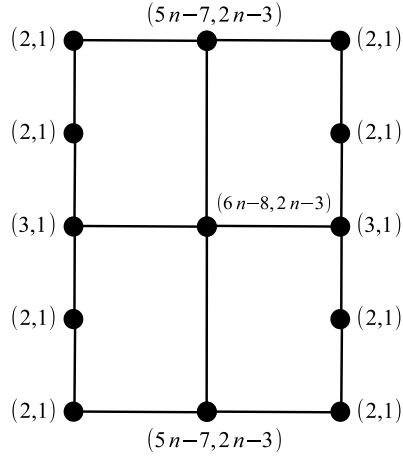}
\end{center}
\caption{\label{kvocientniE1} The graph $(G_n / F_1,a_1,b_1)$.}
\end{figure}

\begin{figure}[h] 
\begin{center}
\includegraphics[scale=0.65]{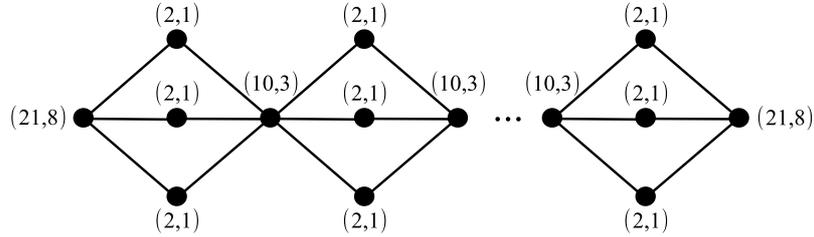}
\end{center}
\caption{\label{kvocientniE2} The graph $(G_n / F_2,a_2,b_2)$, where $F_2 = \bigcup_{i=2}^{n-1}E_i$.}
\end{figure}

\noindent
The Wiener indices of graphs $(G_n / F_1,a_1,b_1)$ and $(G_n / F_1,a_1)$ can be computed directly. The results are

$$W(G/F_1,a_1,b_1) = 84 n^2 + 354n -152,$$ 
$$W(G/F_1,a_1) = 110 n^2 + 404n -196.$$ 

\noindent
To simplify the calculation of the Wiener indices of graphs $(G_n / F_2,a_2,b_2)$ and $(G_n / F_2,a_2)$, we use Corollary \ref{posledica_redukcije} exactly $(n-2)$-times. Thus, we get the double vertex-weighted path on  $(2n-3)$ vertices and we denote it by $(P,a^{(n-2)},b^{(n-2)})$, see Figure \ref{redukcijaE2}.

\begin{figure}[h] 
\begin{center}
\includegraphics[scale=0.65]{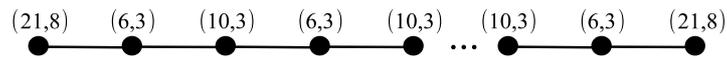}
\end{center}
\caption{\label{redukcijaE2} Path $(P,a^{(n-2)},b^{(n-2)})$ obtained by performing special reductions.}
\end{figure}

\noindent
To compute $W(P,a^{(n-2)},b^{(n-2)})$, we take into account separately the contributions of pairs of vertices such that
\begin{itemize}
\item both vertices have weights $(6,3)$,
\item both vertices have weights $(10,3)$,
\item one vertex has weights $(6,3)$ and the other $(10,3)$,
\item one vertex has weights $(21,8)$ and the other $(6,3)$ or $(10,3)$ or $(21,8)$.

\end{itemize}

\noindent
Therefore, we get the next result

\begin{eqnarray*}
W(P,a^{(n-2)},b^{(n-2)}) & = & (6 \cdot 3 + 3\cdot 6)\sum_{i=1}^{n-3} \sum_{j=1}^{i} (2j) + (10 \cdot 3 + 3\cdot 10)\sum_{i=1}^{n-4} \sum_{j=1}^{i} (2j) \\
& + & 2 \cdot (6 \cdot 3 + 3\cdot 10)\sum_{i=1}^{n-3} \sum_{j=1}^{i} (2j-1) \\
& + & 2 \cdot (21 \cdot 3 + 8 \cdot 6)\sum_{j=1}^{n-2}(2j-1) + 2 \cdot (21 \cdot 3 + 8 \cdot 10)\sum_{j=1}^{n-3}(2j) \\
& + & (21 \cdot 8 + 8 \cdot 21)(2n-4) \\
& = & 64n^3 + 16n^2 - 402n + 228.
\end{eqnarray*}

\noindent
Hence, by Corollary \ref{posledica_redukcije} we obtain
$$ W(G_n/F_2,a_2,b_2) = W(P,a^{(n-2)},b^{(n-2)}) + 24(n-2) = 64n^3 + 16n^2 - 378n + 180.$$

\noindent
Finally, by Corollary \ref{rac_degree} one can calculate
$$DD(G_n) = W(G_n/F_1,a_1,b_1) + W(G_n/F_2,a_2,b_2) = 64n^3 + 100n^2-24n+28.$$

Similarly, one can quickly get the Gutman index of $G_n$. We obtain

\begin{eqnarray*}
W(P,a^{(n-2)}) & = & 6^2 \sum_{i=1}^{n-3} \sum_{j=1}^{i} (2j) + 10^2\sum_{i=1}^{n-4} \sum_{j=1}^{i} (2j) \\
& + & 2 \cdot (6 \cdot 10)\sum_{i=1}^{n-3} \sum_{j=1}^{i} (2j-1) \\
& + & 2 \cdot (21 \cdot 6)\sum_{j=1}^{n-2}(2j-1) + 2 \cdot (21 \cdot 10)\sum_{j=1}^{n-3}(2j) \\
& + & 21^2 \cdot (2n-4) \\
& = & \frac{2}{3} \cdot(128n^3-731n+438).
\end{eqnarray*}

\noindent
Hence, by Theorem \ref{posebna_redukcija0} we get
$$ W(G_n/F_2,a_2) = W(P,a^{(n-2)}) + 24 \cdot (n-2) = \frac{2}{3} \cdot (128n^3-695n+366).$$

\noindent
Lastly, by Theorem \ref{rac_wie} we obtain
$$Gut(G_n) = W(G_n/F_1,a_1) + W(G_n/F_2,a_2) = \frac{2}{3} \cdot (128n^3+165n^2-89n +72).$$

\section{The Wiener index of weighted partial Hamming graphs}

In this section, we show how the Wiener index of a weighted partial Hamming graph can be easily computed. Naturally, the obtained result can be applied also for the Gutman index.


The following well known result will be needed and it was already used in \cite{klavzar-2006}. For the sake of completeness, we give the proof anyway.

\begin{lemma} \label{hamming_lema}
If $G$ is a connected graph, then $G$ is a partial Hamming graph if and only if all the quotient graphs with respect to $\Theta^*$-classes of $G$ are complete.
\end{lemma}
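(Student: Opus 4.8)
The plan is to prove the biconditional by relating the structure of $\Theta^*$-classes to isometric embeddings into Hamming graphs, using the canonical embedding as the central tool. Recall that a Hamming graph is a Cartesian product of complete graphs and a partial Hamming graph is an isometric subgraph of such a product. The canonical embedding $\alpha: V(G) \rightarrow G/E_1 \Box \cdots \Box G/E_k$ is always an irredundant isometric embedding (as stated in the Preliminaries), so the key observation is that $G$ embeds isometrically into $G/E_1 \Box \cdots \Box G/E_k$, where the $E_i$ are the $\Theta^*$-classes. The crux is that this product is a Hamming graph precisely when each factor $G/E_i$ is a complete graph.

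For the direction where all quotient graphs $G/E_i$ are complete, I would argue directly: if each $G/E_i \cong K_{n_i}$ for some $n_i$, then $G/E_1 \Box \cdots \Box G/E_k$ is by definition a Cartesian product of complete graphs, hence a Hamming graph. Since the canonical embedding $\alpha$ is an isometric embedding of $G$ into this Hamming graph, $G$ is an isometric subgraph of a Hamming graph, i.e.\ a partial Hamming graph. This direction is essentially immediate once the canonical embedding is invoked.

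For the converse, suppose $G$ is a partial Hamming graph, so there is an isometric embedding of $G$ into some Hamming graph $H = K_{m_1} \Box \cdots \Box K_{m_t}$. I would use the fact that the edge set of a Cartesian product decomposes naturally into layers according to the coordinate in which adjacent vertices differ, and that this coordinate-partition of the edges of $H$ refines into the $\Theta$-classes; more to the point, since $H$ is a partial Hamming graph with $\Theta = \Theta^*$ on each complete factor, the isometric embedding forces the $\Theta^*$-classes of $G$ to be exactly the traces of the coordinate directions. The heart of the matter is to show that each $\Theta^*$-class of $G$, when contracted, yields a complete graph. I would show that within a single $\Theta^*$-class the corresponding coordinate takes finitely many values on $V(G)$, and any two distinct components of $G \setminus E_i$ are mapped to vertices differing only in that one coordinate, forcing them to be adjacent in $G/E_i$; since any two such components are adjacent, $G/E_i$ is complete.

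The main obstacle I anticipate is the converse direction, specifically making rigorous the claim that the $\Theta^*$-classes of $G$ align with the coordinate directions of the ambient Hamming graph, so that contracting a $\Theta^*$-class produces a complete quotient. The subtlety is that the embedding is only isometric, not a retraction, so one must carefully verify that edges lying in the same $\Theta^*$-class map to edges differing in a common coordinate, and conversely that the resulting quotient vertices are pairwise adjacent; controlling how the $\Theta$ relation interacts with the product structure, and ensuring no two components collapse or fail to be adjacent, is where the careful work lies. I would lean on the established characterization that $\Theta = \Theta^*$ holds in partial Hamming graphs along the lines used for partial cubes, together with the irredundancy of the canonical embedding, to close this gap cleanly.
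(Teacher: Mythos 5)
The forward implication (complete quotients $\Rightarrow$ partial Hamming) in your proposal is correct and coincides with the paper's one-line argument via the canonical embedding. The problem is the converse, which is exactly where you yourself locate ``the careful work,'' and the tool you propose to close it with does not exist: it is \emph{false} that $\Theta = \Theta^*$ holds in partial Hamming graphs ``along the lines used for partial cubes.'' Already $K_4$, which is itself a Hamming graph (a product with a single complete factor), is a counterexample: for two disjoint edges $uv$ and $wx$ of $K_4$ one has $d(u,w)+d(v,x) = 2 = d(u,x)+d(v,w)$, so $uv$ and $wx$ are not $\Theta$-related, yet they are $\Theta^*$-related through the edge $uw$. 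More generally, in any complete factor on at least four vertices, edges with disjoint endpoint pairs are never in relation $\Theta$; the failure of transitivity is precisely why the alignment of $\Theta^*$-classes with coordinate directions requires a genuine argument. A second unproved (and, as literally stated, incorrect) step is your claim that ``any two distinct components of $G \setminus E_i$ are mapped to vertices differing only in that one coordinate'': vertices in distinct components can differ in arbitrarily many coordinates. What you actually need is that each component of $G \setminus E_i$ is the \emph{full} preimage of a single value of the corresponding coordinate, and that any two such components are joined by an edge of $G$; neither is established in your sketch.

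For the record, both gaps can be filled directly, but it takes real work: (i) a distance computation in Cartesian products shows that $\Theta$-related edges of $G$ must differ in the same coordinate; (ii) connectedness of $G$ shows that all edges of $G$ lying in one coordinate direction form a \emph{single} $\Theta^*$-class (any path joining two such edges forces a chain of overlapping value pairs); (iii) a geodesic argument shows that two vertices with the same value in that coordinate are joined by a shortest path avoiding $E_i$ (so distinct components carry distinct values), and that for any two used values some shortest path changes the coordinate exactly once, producing the required edge between the two components. The paper bypasses all of this: it prunes the given isometric embedding to an irredundant one (discarding one-vertex factors and unused vertices, which keeps every factor complete) and then invokes the uniqueness theorem for irredundant isometric embeddings, Theorem 5.3 of \cite{klavzar1993}, to conclude that the pruned embedding \emph{is} the canonical embedding, whence the quotients $G/E_i$ are complete. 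Your proposal gestures at irredundancy but never identifies this uniqueness theorem, and substitutes for it a false statement; as written, the converse direction is not proved.
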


\begin{proof}
The backward implication is obvious and it follows from the definition of the canonical isometric embedding.

Let $G$ be a partial Hamming graph and let $\gamma : G \rightarrow \prod_{j=1}^m {K_{i_j}}$ be an isometric embedding into the Cartesian product of complete graphs $K_{i_j}$. Discard all the factors which contain only one vertex and the unused vertices in each factor. We get an irredundant embedding of a graph $G$ into a product of complete graphs. Therefore, by Theorem 5.3 from \cite{klavzar1993}, this embedding is the canonical isometric embedding, so the quotient graphs of $G$ with respect to $\Theta^*$-classes of $G$ are all complete.
\qed
\end{proof}

We are ready to prove the main theorem of this section, which generalizes a similar result from \cite{klavzar-2006}. In the rest of the section, the $\Theta^*$-classes of a connected graph $G$ will be denoted by $E_1, \ldots, E_k$, $k \in \mathbb{N}$. Moreover, for any $i \in \lbrace 1,\ldots, k \rbrace$, we denote the connected components of the graph $G \setminus E_i$ by $C_{i}^1, \ldots, C_{i}^{r_i}$.

\begin{theorem} \label{hamming_graf_izrek}
If $(G,w)$ is a connected weighted graph, then
$$W(G,w) \geq \frac{1}{2}\sum_{i=1}^k \sum_{j=1}^{r_i} w(C_i^j)w^c(C_i^j),$$
where $w(C_i^j) = \sum_{x \in V(C_i^j)}w(x)$ and $w^c(C_i^j) = \sum_{x \in V(G) \setminus V(C_i^j)}w(x)$ for any $i \in \lbrace 1,\ldots,k \rbrace, j \in \lbrace 1, \ldots, r_i\rbrace$. Moreover, the equality holds if and only if $G$ is a partial Hamming graph.
\end{theorem}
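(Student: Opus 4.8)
The plan is to apply the cut-method decomposition of Theorem \ref{rac_wie} to the $\Theta^*$-partition itself and then bound each quotient Wiener index from below by replacing every distance with $1$. First I would take the partition $\{F_1,\ldots,F_k\} = \{E_1,\ldots,E_k\}$ consisting of the individual $\Theta^*$-classes, which is trivially coarser than the $\Theta^*$-partition. Theorem \ref{rac_wie} then yields
$$W(G,w) = \sum_{i=1}^k W(G/E_i, w_i), \qquad w_i(C_i^j) = w(C_i^j).$$

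Next I would estimate each summand. The vertices of $G/E_i$ are exactly the components $C_i^1,\ldots,C_i^{r_i}$, and any two distinct vertices of a connected graph are at distance at least $1$, so
$$W(G/E_i, w_i) = \sum_{\{j,j'\}} w(C_i^j)w(C_i^{j'})\, d_{G/E_i}(C_i^j,C_i^{j'}) \geq \sum_{\{j,j'\}} w(C_i^j)w(C_i^{j'}).$$
The third step is to rewrite this lower bound in the form appearing in the statement. Since the components $C_i^1,\ldots,C_i^{r_i}$ partition $V(G)$, we have $w^c(C_i^j) = \sum_{j'\neq j} w(C_i^{j'})$, hence
$$\sum_{j=1}^{r_i} w(C_i^j)w^c(C_i^j) = \sum_{j=1}^{r_i}\sum_{j'\neq j} w(C_i^j)w(C_i^{j'}) = 2\sum_{\{j,j'\}} w(C_i^j)w(C_i^{j'}).$$
Summing the previous inequality over all $i$ and inserting the factor $\tfrac12$ produces exactly the claimed bound.

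For the equality case, I would argue as follows. Because $w$ takes values in $\mathbb{R}^+$, every product $w(C_i^j)w(C_i^{j'})$ is strictly positive; therefore equality in the estimate for $W(G/E_i, w_i)$ forces $d_{G/E_i}(C_i^j, C_i^{j'}) = 1$ for every pair, i.e.\ $G/E_i$ is a complete graph. Consequently equality holds throughout if and only if each quotient graph $G/E_i$ is complete, which by Lemma \ref{hamming_lema} is equivalent to $G$ being a partial Hamming graph.

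The step requiring the most care is the equality analysis: it is essential that the weights be strictly positive, since otherwise a pair with vanishing weight product would not force its quotient distance to be $1$, and the implication ``equality $\Rightarrow$ complete quotients'' could fail. Given strict positivity, however, the whole equality characterization reduces cleanly to Lemma \ref{hamming_lema}, so no separate structural argument about partial Hamming graphs is needed.
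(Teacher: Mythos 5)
Your proposal is correct, and it is a genuinely different (if closely related) route from the one the paper writes out: you invoke Theorem \ref{rac_wie} as a black box, decomposing $W(G,w)=\sum_{i=1}^k W(G/E_i,w_i)$ over the $\Theta^*$-classes and then bounding each quotient Wiener index below by its complete-graph value, whereas the paper's proof never cites Theorem \ref{rac_wie}; instead it works at the level of vertex pairs in $G$, expanding $d_G(u,v)=\sum_i d_{G/E_i}(\alpha_i(u),\alpha_i(v))$ via the canonical embedding and Lemma \ref{distance}, bounding each quotient distance by $1$ pair by pair, and only afterwards grouping vertices into components using the observation that $w^c(u)=w^c(v)$ for $u,v$ in the same component. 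In effect you carry out exactly the alternative derivation the paper itself flags in one sentence after its proof (``the previous result can also be derived from Theorem \ref{rac_wie} and Lemma \ref{hamming_lema}'') but does not execute. Your version is shorter and more modular, since the grouping by components is built into the quotient weights $w_i(C_i^j)=w(C_i^j)$; the paper's version is self-contained at the level of Lemma \ref{distance} and exhibits the intermediate per-vertex form $\frac{1}{2}\sum_i\sum_u w(u)w^c(u)$. One point in your favour: you make explicit that strict positivity of the weights is what forces $d_{G/E_i}(C_i^j,C_i^{j'})=1$ for \emph{every} pair in the equality case, a step the paper passes over with ``obviously''; both arguments then conclude identically via Lemma \ref{hamming_lema}.
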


\begin{proof}
Let $\alpha = (\alpha_1, \ldots, \alpha_k)$ be the canonical isometric embedding for graph $G$. By the definition and Lemma \ref{distance} we have
\begin{eqnarray*}
W(G,w) & = & \sum_{\lbrace u,v \rbrace \subseteq V(G)} w(u)w(v)d_G(u,v) \\
& = & \sum_{\lbrace u,v \rbrace \subseteq V(G)} w(u)w(v) \left( \sum_{i=1}^k d_{G / E_i}(\alpha_i(u),\alpha_i(v)) \right) \\
& = & \sum_{i=1}^k \sum_{\lbrace u,v \rbrace \subseteq V(G)} w(u)w(v)d_{G / E_i}(\alpha_i(u),\alpha_i(v)).
\end{eqnarray*}

\noindent
In the rest of the proof, we introduce notation $$w^c(u) =\sum_{\substack{x \in V(G) \\ \alpha_i(u) \neq \alpha_i(x)}}w(x)$$ for any $u \in V(G)$. Since $d_{G / E_i}(\alpha_i(u),\alpha_i(v)) \geq 1$ for all $u,v \in V(G)$ with $\alpha_i(u) \neq\alpha_i(v)$, $i \in \lbrace 1,\ldots, k \rbrace$, we deduce
\begin{eqnarray*}
W(G,w) & \geq & \sum_{i=1}^k \sum_{\substack{\lbrace u,v \rbrace \subseteq V(G) \\ \alpha_i(u) \neq \alpha_i(v)}} w(u)w(v) \\
& = & \frac{1}{2}\sum_{i=1}^k \sum_{u \in V(G)} w(u)w^c(u) \\
& = & \frac{1}{2}\sum_{i=1}^k \sum_{j=1}^{r_i} \sum_{u \in C_i^j} w(u)w^c(u).
\end{eqnarray*}

\noindent
Obviously, for any $u,v \in V(C_i^j)$ it holds $w^c(u)=w^c(v)$. Hence,
\begin{eqnarray*}
W(G,w) & \geq & \frac{1}{2}\sum_{i=1}^k \sum_{j=1}^{r_i} w(C_i^j)w^c(C_i^j).
\end{eqnarray*}

Obviously, the equality holds if and only if $d_{G / E_i}(\alpha_i(u),\alpha_i(v)) = 1$ for all $u,v \in V(G)$ with $\alpha_i(u) \neq\alpha_i(v)$, $i \in \lbrace 1,\ldots, k \rbrace$, which is satisfied when all the quotient graphs with respect to $\Theta^*$ relation are complete graphs. By Lemma \ref{hamming_lema}, this is fulfilled if and only if $G$ is a partial Hamming graph. 
\qed

\end{proof}

\noindent
It is worth mentioning that the previous result can also be derived from Theorem \ref{rac_wie} and Lemma \ref{hamming_lema}.

\begin{corollary} \label{hamming_graf_posl}
If $G$ is a connected graph, then
$$Gut(G) \geq \frac{1}{2}\sum_{i=1}^k \sum_{j=1}^{r_i} {\rm deg}(C_i^j){\rm deg}^c(C_i^j),$$
where ${\rm deg}(C_i^j) = \sum_{x \in V(C_i^j)}{\rm deg}(x)$ and ${\rm deg}^c(C_i^j) = \sum_{x \in V(G) \setminus V(C_i^j)}{\rm deg}(x)$ for any $i \in \lbrace 1,\ldots,k \rbrace, j \in \lbrace 1, \ldots, r_i\rbrace$. Moreover, the equality holds if and only if $G$ is a partial Hamming graph.
\end{corollary}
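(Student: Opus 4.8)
The plan is to obtain Corollary \ref{hamming_graf_posl} as an immediate specialization of Theorem \ref{hamming_graf_izrek}. The key observation is that the Gutman index is precisely a weighted Wiener index: comparing the two definitions
$$Gut(G) = \sum_{\{u,v\}\subseteq V(G)} {\rm deg}(u){\rm deg}(v)\, d(u,v) \quad \text{and} \quad W(G,w) = \sum_{\{u,v\}\subseteq V(G)} w(u)w(v)\, d(u,v),$$
one sees that $Gut(G) = W(G,w)$ as soon as the weight is chosen to be $w = {\rm deg}$.

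First I would check that ${\rm deg}$ is a legitimate weight in the sense required by the theorem, i.e.\ that it maps into $\mathbb{R}^+$. Since $G$ is connected, every vertex has degree at least one (the only exception being the trivial one-vertex graph, for which $Gut(G)=0$ and the right-hand sum is empty, so the statement holds vacuously). Hence $w={\rm deg}$ is a valid positive vertex weight and Theorem \ref{hamming_graf_izrek} applies verbatim.

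Next I would apply the theorem with this choice of $w$. The quantities $w(C_i^j) = \sum_{x\in V(C_i^j)} w(x)$ and $w^c(C_i^j) = \sum_{x \in V(G)\setminus V(C_i^j)} w(x)$ then become exactly the expressions ${\rm deg}(C_i^j)$ and ${\rm deg}^c(C_i^j)$ defined in the corollary, so the asserted inequality follows at once. Moreover, the equality condition in Theorem \ref{hamming_graf_izrek} --- that $G$ be a partial Hamming graph --- is a property of the graph alone and does not depend on the weight, so it transfers unchanged and gives the stated characterization of equality.

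The argument carries no genuine obstacle: it is a direct substitution, and the only point requiring a moment's care is confirming the positivity of the degree weight so that the hypotheses of Theorem \ref{hamming_graf_izrek} are indeed met.
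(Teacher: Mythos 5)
Your proof is correct and is exactly the intended argument: the paper states this corollary without proof as the immediate specialization of Theorem \ref{hamming_graf_izrek} to the weight $w = {\rm deg}$, which is what you do. Your extra check that ${\rm deg}$ is a valid positive weight on a connected graph (needed both for the hypotheses and for the equality characterization) is a sensible touch the paper leaves implicit.
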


For an example, consider the family of graphs $H_n, n \geq 2$, where $n$ denotes the number of inner faces, see Figure \ref{hiska}. It is clear that $|V(H_n)|= 2n+1$ holds for any $n \geq 2$. Figure \ref{hiska} also illustrates all $n$ graphs $H_n$ with removed $\Theta^*$-classes and in Figure \ref{quotientHn} we can see their quotient graphs.

\begin{figure}[h] 
\begin{center}
\includegraphics[scale=0.8]{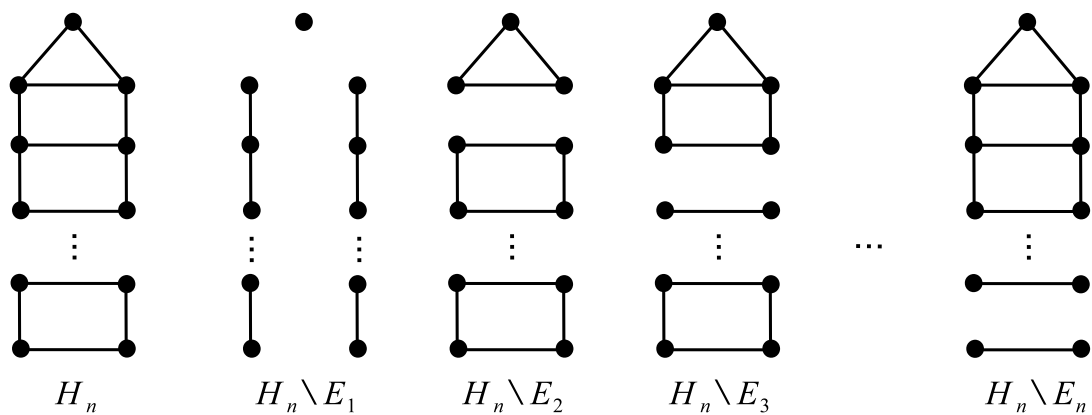}
\end{center}
\caption{\label{hiska} Graph $H_n$ and its subgraphs $H_n \setminus E_i, i \in \{ 1,2,...,n\}$.}
\end{figure}


\begin{figure}[h] 
\begin{center}
\includegraphics[scale=0.8]{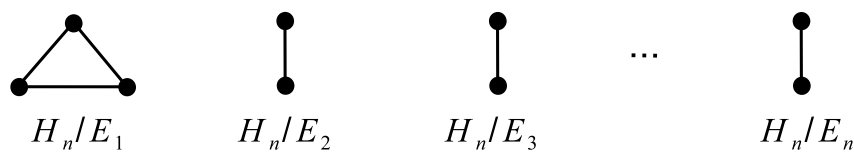}
\end{center}
\caption{\label{quotientHn} Quotient graphs $H_n / E_i, i \in \{ 1,2,...,n\}$.}
\end{figure}

All the quotient graphs with respect to $\Theta^*$ relation are complete, and therefore, by Lemma \ref{hamming_lema}, $H_n$ is a partial Hamming graph for any $n \geq 2$. Moreover, using Corollary \ref{hamming_graf_posl}, we can calculate the closed formula for the Gutman index of $H_n$ in the following way.

\begin{eqnarray*}
Gut(H_n) & = & \frac{1}{2}\sum_{i=1}^n \sum_{j=1}^{r_i} {\rm deg}(C_i^j){\rm deg}^c(C_i^j)\\
& = & \frac{1}{2} \big[ (3n-1)(3n+1) + (3n-1)(3n+1) + 2(6n-2) \big]\\
& + &  \sum_{i=1}^{n-1} \big[ (2+6i)(6(n-i-1)+4) \big] \\
& = & 6n^3 + 9n^2 - 4n + 1.
\end{eqnarray*}

%

\section*{Acknowledgment} 

\noindent The author Niko Tratnik acknowledge the financial support from the Slovenian Research Agency (research core funding No. P1-0297 and J1-9109).

\end{document}